\numberwithin{equation}{section}
\theoremstyle{plain}
\newtheorem{theorem}{Theorem}[section]
\newtheorem{lemma}[theorem]{Lemma}
\newtheorem{observation}[theorem]{Observation}
\newtheorem{definition}[theorem]{Definition}
\newcommand{\be}{\mathbb E}
\newcommand{\bn}{\mathbb N}
\newcommand{\Nk}{\bn_0^k}
\newcommand{\ot}{\otimes}
\newcommand {\id} {{\textrm{id}}}
\newcommand{\wt}{\widetilde}
\newcommand{\wT}{\wt{T}}
\begin{document}
	
\title[Ando type dilation for completely contractive covariant reps] 
{Ando type dilation for completely contractive covariant representations}

	\date{\today}
		\author[Rohilla]{Azad Rohilla}
	\address{Department of Mathematics, Sanskaram University, Jhajjar, India}
	\email{18pmt005@lnmiit.ac.in}
	\author[Saini]{Dimple Saini \textsuperscript{*}}
	\address{Department of Mathematics, Gautam Buddha University, Greater Noida, India}
	\email{dimple92.saini@gmail.com}

\begin{abstract}
	The celebrated theorem of Ando says that a pair of commuting contraction on a Hilbert space always dilates to an isometric dilation. Building on this, Solel proved that every completely contractive covariant representation of a product system $\be$ over $\mathbb N^2_0$ dilates to an isometric covariant
	representation of $\be$ over $\mathbb N^2_0.$ This result generalizes Ando’s theorem in the special case where the $C^{*}$-algebra $\mathcal{M}=\mathbb{C}$ and $\mathbb{E}(\mathbf{n})=\mathbb{C}$ for all $\mathbf{n}\in \mathbb N^2_0.$ This naturally leads to the question: What class of isometric covariant representations of a product system $\be$ over $\mathbb N^2_0$ are sufficient to
	serve as dilations for completely contractive covariant representations, as established in Solel’s dilation framework? The central objective of this article is to investigate such isometric covariant representations that serve as dilations of completely contractive covariant representations.
\end{abstract}

\subjclass{Primary 46L08; Secondary 47A13, 47A15, 47L55}

\keywords{Hilbert $C^*$-modules, Covariant representations, Invariant subspaces, Tensor product}

\maketitle


\section{Introduction}
The starting point of dilation theory is Sz.-Nagy theorem \cite{NFS}, which states that every contraction on a Hilbert space dilates to an isometry on a larger Hilbert space. This result laid the foundation for a deeper understanding of operator theory, providing a powerful tool for studying contractions through their isometric extensions. Ando \cite{AT63}, generalizing Sz.-Nagy’s dilation, constructed isometric dilations for pairs of commuting contractions. That is, for every pair of commuting contractions $(T_1,T_2)$ on a Hilbert space $\mathcal{ H},$ then there exists a pair of commuting isometries $(V_1,V_2)$ on a Hilbert space $\mathcal{K} \supseteq \mathcal{H}$ such that  
$$T_1^{n_1}T_2^{n_2}=P_{\mathcal{ H}}V_1^{n_1}V_2^{n_2}|_{\mathcal{ H}} \quad \quad \mbox{for all} \quad \quad (n_1,n_2)\in \mathbb{Z}_{+}^2,$$ where $P_{\mathcal{ H}}$
denotes the orthogonal projection onto ${\mathcal{ H}}.$
Ando’s theorem confirms that every pair of commuting contractions can be dilated to a pair of commuting isometries, providing a complete solution in the two-variable case. Parrott \cite{SP70} gave a counterexample showing that for $n\ge 3,$ $n$-tuples of commuting contractions do not generally admit isometric dilations. This striking limitation highlights the intrinsic difficulties of multivariable dilation theory and motivates a more nuanced investigation into the specific conditions under which such dilations might still be possible.

In recent years, a growing body of literature has explored the interplay between dilation and positivity, revealing that certain classes of operator tuples under specific positivity assumptions can indeed possess isometric dilations. See \cite{BKHJ19,CV93,BJ17,GS97,MV93} for
more details in the polydisc setup. The main motivation behind our interest comes from a recent work by Das and Sarkar \cite{BJ17}, obtained an explicit isometric dilation of pair of commuting contractions in the following theorem:

\begin{theorem}
	Let $(T_1, T_2)$ be a pair of commuting contractions on a Hilbert space $\mathcal{ H}.$ Let $T_1$ be pure and dim $\mathcal{D}_{T_j} <\infty, j=1,2.$ Then there exist an isometry $\Pi :\mathcal{ H}\to H^2_{\mathcal{D}_{T_1}}(\mathbb{D})$ and an inner function $\psi\in H^{\infty}_{\mathcal{B}(\mathcal{D}_{T_1})}(\mathbb{D})$ such that $$\Pi T_1^*=M_z^*\Pi \quad \quad \mbox{and}\quad \quad  \Pi T_2^*=M_{\psi}^*\Pi,$$ where $\mathcal{D}_{T_j}=\overline{ran}(I_{\mathcal{ H}} -T_jT^*_j)$ for $j=1,2.$
\end{theorem}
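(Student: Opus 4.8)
The plan is to dilate $T_1$ to the unilateral shift via the canonical model of a pure contraction, to transfer $T_2$ to a multiplier using only the relation $T_1T_2=T_2T_1$, and then — the real work — to upgrade that multiplier to an \emph{inner} one; this last step is where $\dim\mathcal D_{T_1},\dim\mathcal D_{T_2}<\infty$ are used. First I would set $\Delta_1:=(I_{\mathcal H}-T_1T_1^*)^{1/2}$, so that $\mathcal D_{T_1}=\overline{\mathrm{ran}}\,\Delta_1$, and define $\Pi:\mathcal H\to H^2_{\mathcal D_{T_1}}(\mathbb D)$ by
\[
\Pi h=\sum_{n\ge 0}\big(\Delta_1 T_1^{*n}h\big)z^n .
\]
The telescoping identity $\sum_{n\ge 0}\|\Delta_1 T_1^{*n}h\|^2=\|h\|^2-\lim_n\|T_1^{*n}h\|^2$ together with purity of $T_1$ shows $\Pi$ is an isometry, and comparing Taylor coefficients gives $\Pi T_1^*=M_z^*\Pi$. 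Writing $\mathcal Q:=\mathrm{ran}\,\Pi$, one has $\Pi^*\Pi=I$, $\Pi^*(\xi z^n)=T_1^n\Delta_1\xi$ and $\Pi^*M_z^n\Pi=T_1^n$, so $M_z$ on $H^2_{\mathcal D_{T_1}}(\mathbb D)$ realizes the minimal isometric dilation of $T_1$ with $\mathcal H\cong\mathcal Q$; and since $\mathcal Q^\perp=\ker\Pi^*$ is $M_z$-invariant, Beurling--Lax--Halmos provides a Hilbert space $\mathcal E$ with $\dim\mathcal E\le\dim\mathcal D_{T_1}<\infty$ and an inner $\Theta\in H^\infty_{\mathcal B(\mathcal E,\mathcal D_{T_1})}(\mathbb D)$ with $\mathcal Q^\perp=\Theta H^2_{\mathcal E}(\mathbb D)$. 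Thus $(T_1,T_2)$ is realized on the model space $\mathcal Q=H^2_{\mathcal D_{T_1}}(\mathbb D)\ominus\Theta H^2_{\mathcal E}(\mathbb D)$ with $T_1\cong P_{\mathcal Q}M_z|_{\mathcal Q}$.

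Next I would produce $\psi$. Since $T_2$ commutes with $T_1$, the commutant lifting theorem applied to $T_2$ and the minimal isometric dilation $M_z$ of $T_1$ (under $\mathcal H\cong\mathcal Q$) yields $Y\in\mathcal B\big(H^2_{\mathcal D_{T_1}}(\mathbb D)\big)$ with $\|Y\|\le 1$, $YM_z=M_zY$ and $P_{\mathcal Q}Y=\widetilde T_2\,P_{\mathcal Q}$, where $\widetilde T_2:=\Pi T_2\Pi^*|_{\mathcal Q}$; equivalently $Y^*\mathcal Q\subseteq\mathcal Q$ and $Y^*\Pi=\Pi T_2^*$. As $Y$ commutes with the shift, $Y=M_\psi$ for a unique $\psi\in H^\infty_{\mathcal B(\mathcal D_{T_1})}(\mathbb D)$ with $\|\psi\|_\infty\le 1$, and then $M_\psi^*\Pi=\Pi T_2^*$ and $T_2\cong P_{\mathcal Q}M_\psi|_{\mathcal Q}$; in particular $\mathcal Q^\perp$ is $M_\psi$-invariant, which is the factorization $\psi\Theta=\Theta\eta$ for some $\eta\in H^\infty_{\mathcal B(\mathcal E)}(\mathbb D)$.

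The hard part will be making $\psi$ \emph{inner}. Commutant lifting produces only a contraction, and already for $\mathcal H=\mathbb C$ its naive output is the non-inner constant $\psi\equiv T_2$, whereas an inner lift exists only via a Schwarz--Pick type interpolation — so the finiteness hypotheses must be used essentially at this point. The route I would take: with respect to $\mathcal Q\oplus\mathcal Q^\perp$, invariance of $\mathcal Q^\perp$ makes $M_\psi$ block lower triangular, $M_\psi=\left(\begin{smallmatrix}\widetilde T_2&0\\ C&D\end{smallmatrix}\right)$, and the isometry condition $M_\psi^*M_\psi=I$ reads $\widetilde T_2^{\,*}\widetilde T_2+C^*C=I$, $C^*D=0$, $D^*D=I$ — i.e.\ $C^*C$ must realize the defect $I-\widetilde T_2^{\,*}\widetilde T_2\cong I-T_2^*T_2$ of $T_2$, the range of $C$ must be orthogonal to that of an isometric shift-multiplier $D$ on $\mathcal Q^\perp\cong H^2_{\mathcal E}(\mathbb D)$, and $M_\psi$ must remain a multiplier (which, through $M_zM_\psi=M_\psi M_z$, turns the off-diagonal corner into a Sylvester-type equation). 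Since $\mathcal Q^\perp\cong H^2_{\mathcal E}(\mathbb D)$ is infinite-dimensional whenever $\mathcal E\ne 0$, there is room to \emph{complete} the contractive lift to an isometric one by absorbing its defect into $\mathcal Q^\perp$: concretely, one packages $T_2$ together with the finite-dimensional defect spaces into a unitary colligation whose state operator is unitarily equivalent to (the adjoint of) $T_1$, and takes $\psi$ to be the associated transfer function. Purity of $T_1$ makes that state operator strongly stable, which is exactly the condition forcing the transfer function to be inner, and finiteness of the defects keeps all coefficient spaces finite-dimensional, so that $\psi$ is a matrix-valued inner function in $H^\infty_{\mathcal B(\mathcal D_{T_1})}(\mathbb D)$. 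The remaining — and, I expect, genuinely delicate — point is to arrange the colligation so that the inner $\psi$ thus obtained still satisfies $M_\psi^*\Pi=\Pi T_2^*$; that, rather than any earlier step, is where I anticipate the main difficulty.
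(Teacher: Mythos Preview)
Your construction of $\Pi$ and the intertwining $\Pi T_1^*=M_z^*\Pi$ via the Poisson kernel is exactly the paper's starting point. After that, however, the paper (following Das--Sarkar, whose result this is and whose argument the paper is generalizing) does \emph{not} pass through commutant lifting: it builds $\psi$ directly as a transfer function and verifies the second intertwining by an explicit identity. From
\[
\|\Delta_1 h\|^2+\|\Delta_2 T_1^*h\|^2=\|\Delta_1 T_2^*h\|^2+\|\Delta_2 h\|^2\qquad(h\in\mathcal H,\ \Delta_j=(I-T_jT_j^*)^{1/2})
\]
one gets an isometry on a subspace which, because $\dim\mathcal D_{T_1}+\dim\mathcal D_{T_2}<\infty$, extends to a unitary
$U=\bigl(\begin{smallmatrix}A&B\\ C&D\end{smallmatrix}\bigr)$ on $\mathcal D_{T_1}\oplus\mathcal D_{T_2}$ sending $(\Delta_1 h,\Delta_2 T_1^*h)$ to $(\Delta_1 T_2^*h,\Delta_2 h)$; one then sets $\psi(z)=A^*+zC^*(I-zD^*)^{-1}B^*$. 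The key lemma (Lemma~2.1 in the paper) is the telescoping identity
\[
\Delta_1 T_2^*=A\Delta_1+\sum_{n\ge 0}BD^{\,n}C\Delta_1\,T_1^{*\,(n+1)},
\]
whose convergence uses purity of $T_1$; feeding this into $\langle M_\psi^*\Pi h,\eta z^k\rangle$ and comparing with $\langle \Pi T_2^*h,\eta z^k\rangle$ gives $\Pi T_2^*=M_\psi^*\Pi$ by a direct computation. So the step you flag as ``genuinely delicate'' is dispatched by this single identity, not by modifying a commutant-lifting solution after the fact.

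Your colligation picture in the last paragraph is close in spirit but has the state space wrong: the state space is $\mathcal D_{T_2}$ with state operator $D^*$, \emph{not} $\mathcal H$ with state operator $T_1^*$. Consequently innerness of $\psi$ does not come from purity of $T_1$; it comes from $\dim\mathcal D_{T_2}<\infty$, which forces the completely non-unitary part of the contraction $D$ on $\mathcal D_{T_2}$ to have spectral radius strictly less than $1$, hence to be pure --- and that (recorded in the paper as an Observation) is exactly the condition making the transfer function isometric. Purity of $T_1$ is used in two other places: to make $\Pi$ isometric, and to make the series in the identity above converge.
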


Operator and function theories are closely intertwined and have long been regarded as well-established fields with significant connections to a wide range of disciplines. These areas continue to advance, both by addressing traditional problems and by extending classical ideas to more generalized frameworks. One such extension is the concept of Hilbert $C^*$-modules, developed through $C^*$-correspondences and covariant representations. 

Cuntz \cite{C77} studied a $C^*$-algebra, known as Cuntz algebra, generated by isometries $V_1,\dots,V_n$  $(n\ge 2)$ on a Hilbert space with $V_i$'s have orthogonal range. Wold decomposition for two isometries with orthogonal range was first introduced by Frazho \cite{F84}. Popescu \cite{POP89} extended this decomposition to the case of an infinite sequence of isometries with orthogonal final spaces, and explored a minimal isometric dilation for an infinite sequence of noncommuting contractive operators on a Hilbert space. Pimsner \cite{P97} generalized the construction of Cuntz algebras with the help of the notion of isometric representations of $C^*$-correspondences. Muhly and Solel \cite{MR1648483} discussed the Wold decomposition for the isometric covariant representations of $C^*$-correspondences based on \cite{POP89}. 

Muhly and Solel \cite{MS98} presented isometric dilation for completely contractive covariant representations of $C^*$-correspondences. Solel \cite{S06} proved that every completely contractive covariant representation of product system of $C^*$-correspondences over the semigroup $\mathbb{N}_0^2$ dilates to an isometric representation. Solel \cite{S08} studied completely contractive representations of product system of $C^*$-correspondences over $\mathbb{N}_0^k,$ and gave a necessary and sufficient condition for such a representation to have a regular isometric dilation. In \cite{S009} Skalski established some sufficient conditions for the existence of a not necessarily regular isometric dilation of a completely contractive representation of the product system and discussed the difference between regular and $*$-regular dilations. The main purpose of this paper is to study such isometric covariant representations that serve as dilations of completely contractive covariant representations.

\subsection{Notations and Preliminaries}

We begin this section by recalling few concepts from  \cite{La95,MS98,MR0355613,RTV,DTV2}. Let $E$ be a Hilbert $C^*$-module over a $C^*$-algebra $\mathcal M.$ By $B^a(E)$ we denote the $C^*$-algebra of all
adjointable operators on $E$. We say that the module $E$ is a {\it
	$C^*$-correspondence over $\mathcal M$} if there exists a left $\mathcal
M$-module structure through a non-zero $*$-homomorphism
$\phi:\mathcal M\to B^a(E)$ in the following sense
\[
a\xi:=\phi(a)\xi \quad \quad (a\in\mathcal M, \xi\in E).
\]
In this article, each $*$-homomorphism considered is
essential, this means that, the closed linear span of
$\phi(\mathcal M)E$ equals $E.$ If $F$ is another $C^*$-correspondence over $\mathcal M,$ then we may consider the notion of
tensor product $F\bigotimes_{\phi} E$ (cf. \cite{La95}) which satisfy
\[
(\eta_1 a)\otimes \xi_1=\eta_1\otimes \phi(a)\xi_1,
\]
\[
\langle\eta_1\otimes\xi_1,\eta_2\otimes\xi_2\rangle=\langle\xi_1,\phi(\langle\eta_1,\eta_2\rangle)\xi_2\rangle
\]
for every $\eta_1,\eta_2\in F,$ $\xi_1,\xi_2\in E$ and $a\in\mathcal
M.$ In this paper, we adopt the following notations: $\mathcal{H}$ denotes a Hilbert space, $E$ represents a $C^*$-correspondence over $\mathcal{M}$, and $B(\mathcal{H})$ refers to the algebra of all bounded linear operators on $\mathcal{H}.$ The notion of completely contractive covariant representation plays a crucial role in this paper.

\begin{definition}
	Let $\sigma:\mathcal M\to B(\mathcal H)$ be a representation and $T: E\to B(\mathcal H)$ be a linear map. Then the tuple $(\sigma,T)$ is said to be a {\rm covariant representation} of $E$ on $\mathcal H$ if
	\[
	T(a\xi a')=\sigma(a)T(\xi)\sigma(a') \quad \quad (\xi\in E,
	a,a'\in\mathcal M).
	\]
	We say that the covariant representation $(\sigma,T)$ is {\rm completely
		bounded (respectively, completely contractive)} if $T$ is completely bounded (respectively, completely contractive ) as a linear map on $E$ (endowed with the usual operator space structure by viewing as a left corner in the linking algebra $\mathcal{L}_E$ of $E$). Moreover, $(\sigma,T)$ is called {\rm isometric} if
	\[
	T(\xi)^*T(\zeta)=\sigma(\langle \xi,\zeta\rangle) \quad \quad
	(\xi,\zeta\in E).
	\]
	
\end{definition}

Muhly and Solel gave the following key lemma in \cite{MS98} which is useful to classify the completely bounded covariant representations of a $C^*$-correspondence:
\begin{lemma}\label{DSAZ1}
	The map $(\sigma,T)\mapsto \widetilde T$ provides a bijection
	between the collection of all completely bounded covariant (respectively, completely contractive covariant)
	representations $(\sigma,T)$ of $E$ on $\mathcal H$ and the
	collection of all bounded (respectively, contractive) linear maps
	$\widetilde{T}:~\mbox{$E\bigotimes_{\sigma} \mathcal H\to \mathcal
		H$}$ defined by
	\[
	\widetilde{T}(\xi\otimes h):=T(\xi)h \quad \quad (\xi\in E,
	h\in\mathcal H),
	\]
	such that $\widetilde{T}(\phi(a)\otimes I_{\mathcal
		H})=\sigma(a)\widetilde{T}$, $a\in\mathcal M$. Moreover, $\widetilde T$ is isometry (respectively, co-isometry) if and only if $(\sigma,T)$ is isometric (respectively, co-isometric).
\end{lemma}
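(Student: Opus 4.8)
The plan is to show that the two assignments $(\sigma,T)\mapsto\widetilde T$ and $\widetilde T\mapsto(\sigma,T)$, the latter defined by $T(\xi)h:=\widetilde T(\xi\otimes h)$, are mutually inverse and that $\|\widetilde T\|=\|T\|_{cb}$; everything in the statement then follows. The passage $\widetilde T\mapsto(\sigma,T)$ is immediate: if $\widetilde T(\phi(a)\otimes I_{\mathcal H})=\sigma(a)\widetilde T$, then $T(a\xi a')h=\widetilde T(\phi(a)\xi a'\otimes h)=\sigma(a)\widetilde T(\xi\otimes\sigma(a')h)=\sigma(a)T(\xi)\sigma(a')h$, so $(\sigma,T)$ is a covariant representation (and $T$ is completely bounded once the norm comparison below is in hand). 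For the reverse passage one must first check that $\xi\otimes h\mapsto T(\xi)h$ is compatible with the balancing relation defining $E\otimes_\sigma\mathcal H$; this is exactly the identity $T(\xi a)=T(\xi)\sigma(a)$, which is covariance with an approximate unit inserted on the left (legitimate since $\phi$ and $\sigma$ are essential). Thus $\widetilde T$ is well defined on a dense subspace of $E\otimes_\sigma\mathcal H$, and the relation $\widetilde T(\phi(a)\otimes I)=\sigma(a)\widetilde T$ is just $T(a\xi)=\sigma(a)T(\xi)$ read through the definition.

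One half of the norm identity is a direct computation: for $X=[\xi_{ij}]\in M_n(E)$ and $\mathbf h=(h_1,\dots,h_n)\in\mathcal H^{(n)}$,
\[
\bigl\|[T(\xi_{ij})]\mathbf h\bigr\|^2=\sum_i\Bigl\|\widetilde T\Bigl(\textstyle\sum_j\xi_{ij}\otimes h_j\Bigr)\Bigr\|^2\le\|\widetilde T\|^2\,\bigl\langle\mathbf h,\sigma^{(n)}(\langle X,X\rangle)\mathbf h\bigr\rangle\le\|\widetilde T\|^2\,\|X\|_{M_n(E)}^2\,\|\mathbf h\|^2,
\]
because $\sigma^{(n)}$ is a $*$-representation; hence $\|T\|_{cb}\le\|\widetilde T\|$. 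The reverse inequality is the crux. After rescaling we may assume $\|T\|_{cb}\le1$ and must show $\|\widetilde T\zeta\|\le\|\zeta\|$ for $\zeta=\sum_{j=1}^m\xi_j\otimes h_j$. The obstruction is that the naive row estimate only bounds $\|\widetilde T\zeta\|^2$ by $\|(\langle\xi_i,\xi_j\rangle)_{ij}\|\sum_j\|h_j\|^2$, which typically exceeds $\|\zeta\|^2=\langle\mathbf h,\sigma^{(m)}((\langle\xi_i,\xi_j\rangle)_{ij})\mathbf h\rangle$. The remedy is to absorb the sum into a single elementary tensor over the matrix-amplified correspondence: let $X\in M_m(E)$ have first row $(\xi_1,\dots,\xi_m)$ and zeros below. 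Under the canonical unitary $(E\otimes_\sigma\mathcal H)^{(m)}\cong M_m(E)\otimes_{\sigma^{(m)}}\mathcal H^{(m)}$, the vector $(\zeta,0,\dots,0)$ corresponds to the elementary tensor $X\otimes\mathbf h$, with $\|X\otimes\mathbf h\|^2=\langle\mathbf h,\sigma^{(m)}(\langle X,X\rangle)\mathbf h\rangle=\|\zeta\|^2$, while $\widetilde{T^{(m)}}(X\otimes\mathbf h)$ has first coordinate $\widetilde T\zeta$. Since complete contractivity of $T$ is precisely the statement that $T^{(m)}\colon M_m(E)\to B(\mathcal H^{(m)})$ is contractive, it suffices to prove the single-tensor estimate $\|S(\eta)g\|\le\|\eta\otimes g\|$ for every contractive covariant representation $(\rho,S)$ of a $C^*$-correspondence and every elementary tensor $\eta\otimes g$. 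For that, fix $\epsilon>0$, set $u_\epsilon=(\langle\eta,\eta\rangle+\epsilon)^{1/2}$, invertible in the unitization, and let $\widetilde\rho$ denote the unital extension of $\rho$; then $\langle\eta u_\epsilon^{-1},\eta u_\epsilon^{-1}\rangle=\langle\eta,\eta\rangle(\langle\eta,\eta\rangle+\epsilon)^{-1}\le 1$, so $\|S(\eta)\widetilde\rho(u_\epsilon^{-1})\|=\|S(\eta u_\epsilon^{-1})\|\le1$, and conjugating by the self-adjoint invertible $\widetilde\rho(u_\epsilon)$ gives $S(\eta)^*S(\eta)\le\widetilde\rho(u_\epsilon)^2=\rho(\langle\eta,\eta\rangle)+\epsilon I$; letting $\epsilon\to0$ and pairing with $g$ yields the estimate. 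This gives $\|\widetilde T\|\le\|T\|_{cb}$, hence equality; in particular $(\sigma,T)$ is completely contractive if and only if $\widetilde T$ is a contraction, and the two assignments are mutually inverse bijections between the stated classes.

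Finally, the isometric and co-isometric cases follow from the identity $\langle\widetilde T^*\widetilde T(\xi\otimes h),\zeta\otimes k\rangle=\langle h,T(\xi)^*T(\zeta)k\rangle$, together with the dual identity for $\widetilde T\widetilde T^*$: thus $\widetilde T^*\widetilde T=I_{E\otimes_\sigma\mathcal H}$ exactly when $T(\xi)^*T(\zeta)=\sigma(\langle\xi,\zeta\rangle)$ for all $\xi,\zeta$, i.e.\ when $(\sigma,T)$ is isometric, and symmetrically $\widetilde T\widetilde T^*=I_{\mathcal H}$ exactly when $(\sigma,T)$ is co-isometric. The one genuinely delicate step is the inequality $\|\widetilde T\|\le\|T\|_{cb}$: this is where the hypothesis of complete contractivity, rather than mere contractivity, is essential, and it enters precisely through the reduction to a single elementary tensor over $M_m(E)$.
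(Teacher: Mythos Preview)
The paper does not actually prove this lemma: it is stated with attribution to Muhly and Solel \cite{MS98} and used as a black box throughout, so there is no ``paper's own proof'' to compare against. Your argument is essentially the standard one from the original source and is correct. The key nontrivial step, as you rightly emphasize, is the inequality $\|\widetilde T\|\le\|T\|_{cb}$, and your reduction---passing to the matrix amplification $(\sigma^{(m)},T^{(m)})$ acting on $M_m(E)\otimes_{\sigma^{(m)}}\mathcal H^{(m)}\cong(E\otimes_\sigma\mathcal H)^{(m)}$, so that an arbitrary finite sum of elementary tensors becomes a single elementary tensor, and then using the $\epsilon$-regularization $u_\epsilon=(\langle\eta,\eta\rangle+\epsilon)^{1/2}$ to obtain $S(\eta)^*S(\eta)\le\rho(\langle\eta,\eta\rangle)$---is exactly the mechanism that makes complete (rather than ordinary) contractivity the right hypothesis. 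One minor presentational point: well-definedness of $\widetilde T$ on $E\otimes_\sigma\mathcal H$ is not literally the balancing relation but rather the boundedness estimate itself (vectors in the null space of the semi-inner product must go to zero, and that follows from $\|\widetilde T\zeta\|\le\|T\|_{cb}\|\zeta\|$ on the algebraic tensor product); your argument delivers this, but the order of exposition slightly obscures it. The isometric equivalence is handled correctly; for the co-isometric case note that the paper, like much of the literature, effectively \emph{defines} co-isometric to mean that $\widetilde T$ is a co-isometry, so there is nothing further to prove there.
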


\begin{definition}	
	Suppose that $(\sigma,T)$ is a completely bounded covariant  representation of ${E}$ on $\mathcal{H}.$ A closed  subspace $\mathcal{W}$ of $\mathcal{H}$ is said to be $(\sigma,T)$-{\rm invariant} $(resp. (\sigma,T)$-{\rm reducing}) (cf. \cite{DTV2}) if it is $\sigma(\mathcal{M})$-invariant and (resp. both $\mathcal{W},\mathcal{W}^{\perp}$) is invariant by each operator  $T(\xi)$ for all $\xi \in E.$ The restriction provides a new representation $(\sigma , T)|_{\mathcal{W}}$ of $E$ on $\mathcal{W}.$
\end{definition}

A completely bounded covariant representation $(\sigma, T)$ of $E$ on $\mathcal{H}$ is said to be {\it completely non unitary} if there doesn’t exists a non-zero reducing subspace $\mathcal{W}$ of $\mathcal{H}$ such that $(\sigma, T)|_{\mathcal{W}}$ is isometric as well as co-isometric representation.

For each $n\in \mathbb{N},$
$E^{\otimes n}: =E\otimes_{\phi} \cdots \otimes_{\phi}E$ ($n$ times) ($E^{\otimes 0}=\mathcal{M}$) is a $C^*$-correspondence over the $C^*$-algebra $\mathcal M$, where the left action  of $\mathcal{M}$ on $E^{\otimes n}$  is given by  $$\phi^n(a)(\xi_1 \otimes \cdots \otimes \xi_n):=\phi(a)\xi_1\otimes \cdots \otimes\xi_n.$$ 
For  $n\in \mathbb{N},$ define $\widetilde{T}_n : E^{\otimes n}\otimes \mathcal{H} \to \mathcal{H}$ by $$\widetilde{T}_n (\xi_1 \otimes \dots \otimes \xi_n \otimes h) = T (\xi_1) \dots T(\xi_n) h, \quad \xi_i \in E, h \in \mathcal H.$$
The Fock module $\mathcal{F}(E)= \bigoplus_{n \geq 0}E^{\otimes n}$ is a $C^*$-correspondence over a $C^*$-algebra  $\mathcal M$, with left action  of $\mathcal{M}$ on $\mathcal{F}(E)$ is given by  $$\phi_{\infty}(a)(\oplus_{n \geq 0}\xi_n)=\oplus_{n \geq 0}a\cdot\xi_n , \:\: \xi_n \in E^{\otimes n}, a\in \mathcal{M}.$$
Let $\xi \in E$, we define the creation operator $T_{\xi}$ on $\mathcal{F}(E)$ by $$T_{\xi}(\eta):=\xi \otimes \eta, \:\: \eta \in E^{\otimes n}.$$
Let $\pi $ be a representation of $\mathcal{M}$ on $\mathcal{H}$. Define an isometric covariant representation $(\rho, S)$ of $E$ on a Hilbert space $\mathcal{F}(E)\otimes_{\pi}\mathcal{H}$ by
\begin{align*}
	\rho(a):&=\phi_{\infty}(a) \otimes I_{\mathcal{H}} \:\:, a \in \mathcal{M}&\\
	S(\xi):&=T_{\xi}\otimes I_{\mathcal{H}} ,\:\: \xi \in E,
\end{align*} such a representation $(\rho, S)$ is referred to as the induced representation
(cf. \cite{SZ08}) induced by $\pi.$ Let $(\sigma, T)$ and $(\psi, V)$  be two completely bounded covariant representations of $E$ on the Hilbert spaces $\mathcal{H}$ and $\mathcal{K},$ respectively. We say that $(\sigma, T)$ is {\it isomorphic} to $(\psi, V)$ if there exists a unitary operator $U: \mathcal{H} \to\mathcal{K}$ such that $U\sigma(a)=\psi(a)U$ and $UT(\xi)=V(\xi)U$ for all $\xi \in E, a \in \mathcal{M}.$  A closed subspace $\mathcal{W}\subseteq \mathcal{H}$ is called {\rm wandering subspace} for $(\sigma , T)$ if $\mathcal{W}$ is orthogonal to
$\widetilde{T}_n(E^{\ot n}\ot \mathcal{W})$ for all $n\in \mathbb{N}.$ We say that the wandering subspace $\mathcal{W}$ is {\rm generating} for $(\sigma ,T)$ 
if  $$\mathcal{H}= \bigvee_{n\ge 0}\widetilde{T}_n(E^{\ot n}\ot \mathcal{W}).$$

The following theorem is an analogue of \cite[Theorem 3.2]{NFS}.

\begin{theorem}\label{contra}
	Let $(\sigma, T)$ be a completely contractive covariant representation of $E$ on $\mathcal{H}.$ Then there exists a unique orthogonal decomposition of $\mathcal{H}=\mathcal{H}_{0}\oplus\mathcal{H}_{1}$ such that the following holds:\begin{enumerate}
		\item $\mathcal{H}_{0}$ and $\mathcal{H}_{1}$ reduce $(\sigma, T).$
		\item $(\sigma, T)|_{\mathcal{H}_{0}}$ is an isometric as well as co-isometric representation.
		\item $(\sigma, T)|_{\mathcal{H}_{1}}$ is completely non unitary.
	\end{enumerate}  
\end{theorem}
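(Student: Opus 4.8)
The plan is to imitate the classical Sz.-Nagy--Foias canonical decomposition. I will take $\mathcal{H}_0$ to be the \emph{largest} closed subspace of $\mathcal{H}$ that reduces $(\sigma,T)$ and on which $(\sigma,T)$ restricts to a representation which is at the same time isometric and co-isometric, and I will set $\mathcal{H}_1=\mathcal{H}_0^{\perp}$. The first thing to record is a dictionary, via Lemma~\ref{DSAZ1}: $(\sigma,T)$ is encoded by the contraction $\widetilde{T}\colon E\otimes_{\sigma}\mathcal{H}\to\mathcal{H}$ with $\widetilde{T}(\phi(a)\otimes I_{\mathcal{H}})=\sigma(a)\widetilde{T}$; the representation is isometric exactly when $\widetilde{T}^{*}\widetilde{T}=I$ and co-isometric exactly when $\widetilde{T}\widetilde{T}^{*}=I$; and when $\mathcal{W}$ reduces $(\sigma,T)$ then $\mathcal{W}$ reduces $\sigma$, the space $E\otimes_{\sigma}\mathcal{W}$ sits isometrically inside $E\otimes_{\sigma}\mathcal{H}$, $\widetilde{T}$ sends $E\otimes_{\sigma}\mathcal{W}$ into $\mathcal{W}$ and $E\otimes_{\sigma}\mathcal{W}^{\perp}$ into $\mathcal{W}^{\perp}$, and $\widetilde{T|_{\mathcal{W}}}$ is $\widetilde{T}$ compressed to $E\otimes_{\sigma}\mathcal{W}$. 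Thus "$(\sigma,T)|_{\mathcal{W}}$ is isometric and co-isometric" means precisely that $I-\widetilde{T}^{*}\widetilde{T}$ vanishes on $E\otimes_{\sigma}\mathcal{W}$ and $I-\widetilde{T}\widetilde{T}^{*}$ vanishes on $\mathcal{W}$.

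The crucial step will be to show that the family $\mathcal{F}$ of all closed reducing subspaces $\mathcal{W}$ with $(\sigma,T)|_{\mathcal{W}}$ isometric and co-isometric is stable under taking arbitrary closed linear spans. Since $\{0\}\in\mathcal{F}$, this produces a well-defined, genuinely largest member $\mathcal{H}_0=\overline{\bigvee\{\mathcal{W}:\mathcal{W}\in\mathcal{F}\}}$. Given a subfamily $\{\mathcal{W}_{\alpha}\}\subseteq\mathcal{F}$ with closed span $\mathcal{W}$, one first checks that $\mathcal{W}$ still reduces $(\sigma,T)$: spans of $\sigma(\mathcal{M})$-invariant and of $T(\xi)$-invariant subspaces are again invariant, and $\mathcal{W}^{\perp}=\bigcap_{\alpha}\mathcal{W}_{\alpha}^{\perp}$ is an intersection of $T(\xi)$-invariant subspaces, hence $T(\xi)$-invariant. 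Because $\widetilde{T}$ is a contraction, both $I-\widetilde{T}\widetilde{T}^{*}\geq 0$ and $I-\widetilde{T}^{*}\widetilde{T}\geq 0$. Since $I-\widetilde{T}\widetilde{T}^{*}$ vanishes on each $\mathcal{W}_{\alpha}$, positivity forces $\mathcal{W}_{\alpha}\subseteq\ker(I-\widetilde{T}\widetilde{T}^{*})$, and as this kernel is a closed subspace it contains $\mathcal{W}$; hence $(\sigma,T)|_{\mathcal{W}}$ is co-isometric. Likewise $I-\widetilde{T}^{*}\widetilde{T}$ vanishes on each $E\otimes_{\sigma}\mathcal{W}_{\alpha}$, hence on $\overline{\bigvee_{\alpha}(E\otimes_{\sigma}\mathcal{W}_{\alpha})}$; the norm continuity of $h\mapsto\xi\otimes h$ on $\mathcal{H}$ identifies this last space with $E\otimes_{\sigma}\mathcal{W}$, so $(\sigma,T)|_{\mathcal{W}}$ is isometric as well. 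Thus $\mathcal{W}\in\mathcal{F}$, and in particular $\mathcal{H}_0\in\mathcal{F}$; this yields (2), while (1) follows because $\mathcal{H}_0$ reduces $(\sigma,T)$ and therefore so does $\mathcal{H}_1=\mathcal{H}_0^{\perp}$.

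It remains to handle completeness of $\mathcal{H}_1$ and uniqueness, both of which follow from maximality once one records the routine fact that the restriction of an isometric-and-co-isometric representation to a reducing subspace is again isometric and co-isometric (restrict the unitary $\widetilde{T|_{\mathcal{H}_0}}$ to a reducing subspace). For (3): if $\mathcal{W}\subseteq\mathcal{H}_1$ were a nonzero reducing subspace with $(\sigma,T)|_{\mathcal{W}}$ isometric and co-isometric, then $\mathcal{W}$ would reduce $(\sigma,T)$ on $\mathcal{H}$ and hence lie in $\mathcal{F}$, whence $\mathcal{W}\subseteq\mathcal{H}_0\cap\mathcal{H}_1=\{0\}$; thus $(\sigma,T)|_{\mathcal{H}_1}$ is completely non unitary. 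For uniqueness, if $\mathcal{H}=\mathcal{G}_0\oplus\mathcal{G}_1$ also satisfies (1)--(3), then $\mathcal{G}_0\in\mathcal{F}$ gives $\mathcal{G}_0\subseteq\mathcal{H}_0$; the subspace $\mathcal{H}_0\ominus\mathcal{G}_0=\mathcal{H}_0\cap\mathcal{G}_1$ reduces $(\sigma,T)$, and being a reducing subspace of $\mathcal{H}_0$ it carries an isometric and co-isometric restriction, yet it lies inside the completely non unitary part $\mathcal{G}_1$, forcing it to be $\{0\}$; hence $\mathcal{G}_0=\mathcal{H}_0$ and $\mathcal{G}_1=\mathcal{H}_1$.

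The step I expect to be the main obstacle is the span-stability of $\mathcal{F}$, and within it the isometric half, which forces one to verify carefully the identification $E\otimes_{\sigma}\big(\overline{\bigvee_{\alpha}\mathcal{W}_{\alpha}}\big)=\overline{\bigvee_{\alpha}\big(E\otimes_{\sigma}\mathcal{W}_{\alpha}\big)}$ and to keep precise track of how $\widetilde{T}$, its adjoint, and the interior tensor product interact with the reducing structure (in particular that $\widetilde{T|_{\mathcal{W}}}$ really is the compression of $\widetilde{T}$). A more computational alternative, closer in spirit to the proof of \cite[Theorem~3.2]{NFS}, is to define $\mathcal{H}_0$ explicitly as the set of $h\in\mathcal{H}$ such that $\|\widetilde{T}_n(\eta\otimes h)\|=\|\eta\otimes h\|$ and $\|\widetilde{T}_n^{*}h\|=\|h\|$ for every $n\geq 1$ and every $\eta\in E^{\otimes n}$, to note that this is a closed subspace (each condition carves out the kernel of a positive operator on $\mathcal{H}$), to check directly that $h\in\mathcal{H}_0$ implies $T(\xi)h,\,T(\xi)^{*}h\in\mathcal{H}_0$ so that $\mathcal{H}_0$ reduces $(\sigma,T)$, and then to read off (2) and (3); that route is more hands-on but sidesteps the maximality argument.
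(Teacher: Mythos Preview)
Your proof is correct, and the uniqueness argument matches the paper's. However, your principal route differs from the paper's. You construct $\mathcal{H}_0$ \emph{maximally}, as the closed span of the family $\mathcal{F}$ of all reducing subspaces on which the restriction is both isometric and co-isometric, and you reduce everything to the span-stability of $\mathcal{F}$; this is clean and conceptual, with the only delicate point (which you correctly flag) being the identification $E\otimes_{\sigma}\big(\overline{\bigvee_{\alpha}\mathcal{W}_{\alpha}}\big)=\overline{\bigvee_{\alpha}(E\otimes_{\sigma}\mathcal{W}_{\alpha})}$, valid because each $\mathcal{W}_{\alpha}$ is $\sigma$-reducing. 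The paper instead takes the explicit, computational route you describe as an alternative at the end: it \emph{defines}
\[
\mathcal{H}_0=\Big\{h:\ \|(I_{E^{\otimes n}}\otimes\widetilde{T}_m)(\xi_{n+m}\otimes h)\|=\|\xi_{n+m}\otimes h\|,\ \|(I_{E^{\otimes n}}\otimes\widetilde{T}_m^{*})(\xi_n\otimes h)\|=\|\xi_n\otimes h\|\Big\}
\]
(for all $n,m\in\mathbb{N}_0$ and all $\xi$), and then verifies by direct norm computations that $\widetilde{T}(\xi\otimes h)$ and $\widetilde{T}^{*}h$ stay in $\mathcal{H}_0$. Note that the paper's explicit set carries the extra amplification $I_{E^{\otimes n}}\otimes(\cdot)$ compared with the version you sketch; this extra layer is what makes the step ``$h\in\mathcal{H}_0\Rightarrow\widetilde{T}^{*}h\in E\otimes\mathcal{H}_0$'' go through by a straight index shift. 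Your maximality approach avoids these bookkeeping issues entirely and yields (3) and uniqueness for free from the definition; the paper's approach is more hands-on but makes the set $\mathcal{H}_0$ completely explicit.
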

\begin{proof}
	We define \begin{align*}
		\mathcal{H}_{0}:&=\{h \in \mathcal{H}: \|(I_{E^{\ot n}}\ot \widetilde{T}_{m})(\xi_{n+m}\ot h)\|=\|\xi_{n+m}\ot h\| \quad \mbox{and}\quad  \\& \|(I_{E^{\ot n}}\ot \widetilde{T}_m^{*})(\xi_{n}\ot h)\|=\|\xi_{n}\ot h\| \quad\mbox{for}\quad n,m\in \mathbb{N}_0, \xi_{n+m}\in E^{\ot n+m}\}.
	\end{align*}
	It is easy to verify that $\mathcal{H}_{0}$ is  a closed subspace of $\mathcal{H}.$ Now we need to show that $\mathcal{H}_{0}$ reduces $(\sigma, T).$ Let $h\in \mathcal{H}_0,$ we want to prove that $\widetilde{T}(\xi\ot h)\in \mathcal{H}_{0}$ for all $\xi\in E.$ Note that \begin{align*}&
		\|(I_{E^{\ot n}}\ot \widetilde{T}_m)(\xi_{n+m}\ot \widetilde{T}(\xi\ot h))\|=\|(I_{E^{\ot n}}\ot \widetilde{T}_{m+1})(\xi_{n+m}\ot \xi\ot h)\|\\&=\|\xi_{n+m}\ot \xi\ot h\|=\|(I_{E^{\ot n+m}}\ot \widetilde{T})(\xi_{n+m}\ot \xi\ot h)\|=\|\xi_{n+m}\ot \widetilde{T}(\xi\ot h)\|.
	\end{align*} If $\|\widetilde{T}(\xi\ot h)\|=\|\xi\ot h\|,$ then $\widetilde{T}^{*}\widetilde{T}(\xi\ot h)=\xi\ot h$ we have  \begin{align*}&
		\|(I_{E^{\ot n}}\ot \widetilde{T}_m^{*})(\xi_{n}\ot \widetilde{T}(\xi\ot h))\|\\&=\|(I_{E^{\ot n}}\ot (I_{E} \ot \widetilde{T}_{m-1}^{*})\widetilde{T}^{*})(\xi_{n}\ot \widetilde{T}(\xi\ot h))\|\\&=\|\xi_{n}\ot (I_{E} \ot \widetilde{T}_{m-1}^{*})\widetilde{T}^{*}\widetilde{T}(\xi\ot h)\|=\|\xi_{n}\ot (I_{E} \ot \widetilde{T}_{m-1}^{*})(\xi\ot h)\|\\&=\|(I_{E^{\ot n+1}} \ot \widetilde{T}_{m-1}^{*})(\xi_{n}\ot \xi\ot h)\|=\|\xi_{n}\ot \xi\ot h\|\\&=\|(I_{E^{\ot n+1}} \ot \widetilde{T})(\xi_{n}\ot \xi\ot h)\|=\|\xi_{n}\ot \widetilde{T}(\xi\ot h)\|.
	\end{align*} This implies that $\widetilde{T}(\xi\ot h)\in \mathcal{H}_{0}.$ Also we observe that 
	\begin{align*}
		\|(I_{E^{\ot n}}\ot \widetilde{T}_{m})(\xi_{n+m-1}\ot \widetilde{T}^{*} h)\|=\|\xi_{n+m-1}\ot \widetilde{T}^{*}h\|,
	\end{align*} and \begin{align*}
		\|(I_{E^{\ot n}}\ot \widetilde{T}_m^{*})(\xi_{n-1}\ot \widetilde{T}^{*} h)\|=\|\xi_{n-1}\ot \widetilde{T}^{*} h\|, \quad \mbox{for all} \quad m,n\ge 1.
	\end{align*} This implies that $\mathcal{H}_{0}$ reduces $(\sigma, T).$ Also it is easy to see that $(\sigma, T)|_{\mathcal{H}_{0}}$ is an isometric as well as co-isometric representation. Since $\mathcal{H}_{1}=\mathcal{H}\ominus\mathcal{H}_{0},$ $(\sigma, T)|_{\mathcal{H}_{1}}$ is completely non-unitary. Indeed, let $\mathcal{K}$ be a subspace of $\mathcal{H}_{1}$ such that $(\sigma, T)|_{\mathcal{K}}$ is a unitary. It follows that $\mathcal{K}\subseteq {\mathcal{H}_{0}},$  which is a contradiction.

	Uniqueness: Let $\mathcal{H}=\mathcal{H}'_{0}\oplus\mathcal{H}'_{1}$ be an orthogonal decomposition of $\mathcal{H}$ such that $(\sigma, T)|_{\mathcal{H}'_{0}}$ is an isometric as well as co-isometric representation, then $\mathcal{H}'_{0}\subseteq \mathcal{H}_{0}.$ The spaces $\mathcal{H}'_{0}$ and $\mathcal{H}_{0}$ reduce $(\sigma, T),$ and hence $(\sigma, T)|_{\mathcal{H}_{0}\ominus\mathcal{H}'_{0}}$ is an isometric as well as co-isometric representation. Note that $\mathcal{H}_{0}\ominus\mathcal{H}'_{0}\subseteq \mathcal{H}\ominus\mathcal{H}'_{0}=\mathcal{H}'_{1}.$ Since $(\sigma, T)|_{\mathcal{H}'_{1}}$ is completely non unitary, $\mathcal{H}_{0}\ominus\mathcal{H}'_{0}=\{0\}.$ Therefore $\mathcal{H}_{0}=\mathcal{H}'_{0},$ and hence the orthogonal decomposition is unique.
\end{proof}

Let $(\sigma, T)$ be a completely contractive covariant representation of $E$ on $\mathcal{H},$ that is, $I_{\mathcal{H}}-\wT\wT^*\ge 0.$ Define the positive operator $\Delta_*(T):=(I-\widetilde{T}{\widetilde{T}}^*)^{\frac{1}{2}}$ and defect space $\mathcal{D}_{*,T}:=\overline{Im\Delta_*(T)}$. Since $\wT\wT^*\sigma(a)=\wT(\phi(a)\ot I_{\mathcal{H}})\wT^*=\sigma(a)\wT\wT^*$ for all $a\in \mathcal{M},$ we get  $\Delta_*(T) \in \sigma(\mathcal{M})'.$ Thus $\mathcal{D}_{*,T}$ is $\sigma(\mathcal{M})$-reducing. Define $(\sigma_0, T_0):=(\sigma ,T)|_{\mathcal{D}_{*,T}},$ for simplicity we write $(\sigma, T)$ instead of $(\sigma_0, T_0)$. The Poisson kernel of $T$ is the operator $\Pi: \mathcal{H} \longrightarrow \mathcal{F}(E)\otimes_{\sigma}\mathcal{D}_{*,T}$ defined by $$\Pi h:=\sum_{n\geq0}(I_{E^{\otimes n}}\otimes\Delta_*(T))\widetilde{T}_n^* h,\:\:\: h \in \mathcal{H}.$$

Now we recall an isometric dilation of completely contractive covariant representations of a $C^*$-correspondence due to Muhly and Solel \cite{MS98}. 	
\begin{theorem}\label{dilation}
	Let $(\sigma, T)$ be a completely contractive covariant representation of $E$ on $\mathcal{H}.$ Then the poisson kernel $\Pi$ is a contraction and
	\begin{align*}
		\Pi \sigma(b)=\rho(b)\Pi,\:\: b \in \mathcal{M}\quad  \mbox{and} \:\: (I_{E} \ot \Pi)\widetilde{T}^{*}=\widetilde{S}^{*}\Pi. 
	\end{align*}
	We say that the induced representation $(\rho,S)$ of $E$ induced by $\sigma|_{\mathcal{D}_{*,T}}$ is an {\rm isometric dilation} of $(\sigma, T).$ Moreover, $\Pi$ is an isometry if and only if $(\sigma, T) $ is pure (that is, SOT-$\lim_{n\to\infty} \wT_n\wT_n^*= 0$).
\end{theorem}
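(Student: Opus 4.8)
The plan is to derive all four assertions from a single telescoping identity for the defect operators, so I would first set that up and then read off the conclusions one by one.

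\emph{Step 1: the telescoping identity.} The formula $\widetilde{T}_n(\xi_1\otimes\cdots\otimes\xi_n\otimes h)=T(\xi_1)\cdots T(\xi_n)h$ immediately gives the two factorizations $\widetilde{T}_{n+1}=\widetilde{T}(I_E\otimes\widetilde{T}_n)$ and $\widetilde{T}_{n+1}=\widetilde{T}_n(I_{E^{\otimes n}}\otimes\widetilde{T})$, under the natural identifications of $E^{\otimes(n+1)}\otimes\mathcal H$ with $E\otimes(E^{\otimes n}\otimes\mathcal H)$ and with $E^{\otimes n}\otimes(E\otimes\mathcal H)$. The second one yields $\widetilde{T}_{n+1}\widetilde{T}_{n+1}^*=\widetilde{T}_n(I_{E^{\otimes n}}\otimes\widetilde{T}\widetilde{T}^*)\widetilde{T}_n^*$, so writing $A_n:=\widetilde{T}_n\widetilde{T}_n^*$ (with $A_0=I_{\mathcal H}$) and $B_n:=(I_{E^{\otimes n}}\otimes\Delta_*(T))\widetilde{T}_n^*$,
\[
A_n-A_{n+1}=\widetilde{T}_n\bigl(I_{E^{\otimes n}}\otimes\Delta_*(T)^2\bigr)\widetilde{T}_n^*=B_n^*B_n .
\]
Since $B_nh$ lies in the $n$-th summand $E^{\otimes n}\otimes\mathcal D_{*,T}$ of $\mathcal F(E)\otimes_\sigma\mathcal D_{*,T}$, the vectors $B_nh$ are pairwise orthogonal and a telescoping sum gives $\sum_{n=0}^{N}\|B_nh\|^2=\|h\|^2-\|\widetilde{T}_{N+1}^*h\|^2$. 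From this single line I get: the series defining $\Pi h$ converges; $\|\Pi h\|^2=\|h\|^2-\lim_N\|\widetilde{T}_N^*h\|^2\le\|h\|^2$, so $\Pi$ is a contraction; and, since $(A_N)$ is a decreasing sequence of positive operators, $\Pi$ is an isometry precisely when $\langle A_Nh,h\rangle\to0$ for every $h$, i.e.\ precisely when SOT-$\lim_N\widetilde{T}_N\widetilde{T}_N^*=0$. This already proves the contractivity statement and the final sentence of the theorem.

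\emph{Step 2: intertwining with $\sigma$.} Adjointing the relation $\widetilde{T}(\phi(a)\otimes I_{\mathcal H})=\sigma(a)\widetilde{T}$ from Lemma \ref{DSAZ1} and iterating via $\widetilde{T}_{n+1}^*=(I_{E^{\otimes n}}\otimes\widetilde{T}^*)\widetilde{T}_n^*$ gives $\widetilde{T}_n^*\sigma(b)=(\phi^n(b)\otimes I_{\mathcal H})\widetilde{T}_n^*$. Using also $\Delta_*(T)\in\sigma(\mathcal M)'$, so that $I_{E^{\otimes n}}\otimes\Delta_*(T)$ commutes with $\phi^n(b)\otimes I_{\mathcal H}$, one pushes $\sigma(b)$ through each term of $\Pi$ and recognizes $\phi^n(b)\otimes I_{\mathcal D_{*,T}}$ as the restriction of $\rho(b)=\phi_\infty(b)\otimes I_{\mathcal D_{*,T}}$ to the $n$-th summand; hence $\Pi\sigma(b)=\rho(b)\Pi$.

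\emph{Step 3: intertwining the defects.} Here I would unwind the induced representation: $E\otimes_\rho(\mathcal F(E)\otimes_\sigma\mathcal D_{*,T})\cong(E\otimes_{\phi_\infty}\mathcal F(E))\otimes_\sigma\mathcal D_{*,T}\cong\bigoplus_{n\ge0}E^{\otimes(n+1)}\otimes_\sigma\mathcal D_{*,T}$, under which $\widetilde{S}$ becomes the shift-up isometry onto $\bigoplus_{n\ge1}E^{\otimes n}\otimes_\sigma\mathcal D_{*,T}\subseteq\mathcal F(E)\otimes_\sigma\mathcal D_{*,T}$, so $\widetilde{S}^*$ simply deletes the $0$-th component. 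Applying $I_E\otimes\Pi$ (well defined by Step 2) to $\widetilde{T}^*h$, the $m$-th component of $(I_E\otimes\Pi)\widetilde{T}^*h$ computes to $(I_{E^{\otimes(m+1)}}\otimes\Delta_*(T))(I_E\otimes\widetilde{T}_m^*)\widetilde{T}^*h=(I_{E^{\otimes(m+1)}}\otimes\Delta_*(T))\widetilde{T}_{m+1}^*h$ by the factorization $\widetilde{T}_{m+1}=\widetilde{T}(I_E\otimes\widetilde{T}_m)$; this is exactly the $(m+1)$-st component of $\Pi h$, equivalently the $m$-th component of $\widetilde{S}^*\Pi h$. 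Hence $(I_E\otimes\Pi)\widetilde{T}^*=\widetilde{S}^*\Pi$. The manipulations of the $\widetilde{T}_n$ in Steps 1 and 2 are routine; the part I expect to demand the most care is Step 3, namely keeping the associativity isomorphisms $E\otimes_\rho(\mathcal F(E)\otimes_\sigma\mathcal D_{*,T})\cong\bigoplus_n E^{\otimes(n+1)}\otimes_\sigma\mathcal D_{*,T}$ mutually consistent and checking on elementary tensors that $\widetilde{S}^*$ really is coordinate deletion, so that the component-by-component comparison at the end is legitimate.
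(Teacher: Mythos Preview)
Your proof is correct in all three steps: the telescoping identity is set up properly, the intertwining with $\sigma$ follows from the module property and $\Delta_*(T)\in\sigma(\mathcal M)'$, and your componentwise identification of $(I_E\otimes\Pi)\widetilde T^*$ with $\widetilde S^*\Pi$ via $\widetilde T_{m+1}^*=(I_E\otimes\widetilde T_m^*)\widetilde T^*$ is exactly right. The paper itself does not supply a proof of this theorem; it is stated as a result ``recalled'' from Muhly and Solel \cite{MS98}, so there is no in-paper argument to compare against---your write-up is what such a proof would look like.
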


\subsection{Completely contractive representations over product system}

The central tool we require a
product system of $C^*$-correspondences (see \cite{F02,DTV2,SZ08, S08}): The {\it product system}
$\be$ is defined by a family of $C^*$-correspondences $\{E_1, \ldots,E_k\},k \in \mathbb N$ and by the unitary
isomorphisms $t_{i,j}: E_i \ot E_j \to E_j \ot E_i$ ($i>j$). Using these identifications, for all
${\bf n}=(n_1, \cdots, n_k) \in \Nk$ the correspondence $\be ({\bf n})$ is identified with $E_1^{\ot^{ n_1}} \ot \cdots \ot E_k^{\ot^{n_k}}.$ We use notations $t_{i,i} = \id_{E_i \ot E_i}$ and $t_{i,j} = t_{j,i}^{-1}$ if $i<j.$


\begin{definition}
	Assume $\be$ to be a product system over $\mathbb{N}_{0}^{k}$.
	\begin{itemize}
		\item[(1)] Let $\sigma$ be a
		representation of $\mathcal M$ on $\mathcal H,$ and let $T^{(i)}:E_i \to B(\mathcal H)$ be a  linear maps for $1 \leq i \leq k.$ The tuple $(\sigma, T^{(1)}, \dots, T^{(k)})$ is  called a {\rm completely bounded (respectively, completely contractive) covariant representation} of a product system $\mathbb{E}$ on $\mathcal{H}$ if each pair $(\sigma, T^{(i)})$ is a {completely bounded (respectively, completely contractive) covariant representation} of $E_i$ on $\mathcal{H}$ and satisfy the commutation relation
		\begin{equation*} \label{rep} \wT^{(i)} (I_{E_i} \ot \wT^{(j)}) = \wT^{(j)} (I_{E_j} \ot \wT^{(i)}) (t_{i,j} \ot I_{\mathcal H}) \quad \quad \mbox{for}\quad 1\leq i,j\leq k.
		\end{equation*}
		\item[(2)] 	The covariant representation $(\sigma, T^{(1)}, \ldots, T^{(k)})$ is called {\rm isometric (respectively, co-isometric)} if each $(\sigma, T^{(i)})$ is an isometric (respectively, co-isometric) representation of $E_i.$ The representation $(\sigma, T^{(1)}, \ldots, T^{(k)})$ is called {\rm pure} if each $(\sigma, T^{(i)})$ is pure representation of $E_i$ on $\mathcal{H}.$
	\end{itemize}
\end{definition}
\noindent For each $1\leq i \leq k$
and $l \in \bn,$ define $\wT^{(i)}_l: E_i^{\ot l}\otimes \mathcal H\to \mathcal H$ by
\[ \wT^{(i)}_l (\xi_1 \ot \cdots \ot \xi_l \ot h) := T^{(i)} (\xi_1) \cdots T^{(i)}(\xi_l) h  \] for all $\xi_1, \ldots, \xi_l \in E_{i}, h \in \mathcal H.$
It is easy to verify that
\begin{equation*}
	\wT^{(i)}_l=\wT^{(i)}(I_{E_i} \otimes \wT^{(i)}) \cdots (I_{E^{\otimes l-1}_i} \otimes  \wT^{(i)}).
\end{equation*}
For $\mathbf{n}=(n_1, \cdots, n_k) \in \mathbb{N}_0^k $, we use notation  $\wT_{\mathbf{n}},$ where $\wT_{\mathbf{n}}:\mathbb{E}(\mathbf{n})\otimes_{\sigma} \mathcal{H} \to \mathcal{H}$ is given by
$$\wT_{\mathbf{n}}=\wT_{n_1}^{(1)}\left(I_{E_1^{\otimes n_1}} \otimes\wT_{n_2}^{(2)}\right) \cdots \left(I_{E_1^{\otimes n_1} \otimes \cdots \otimes E_{k-1}^{\otimes {n_{k-1}}}} \otimes\wT_{n_k}^{(k)}\right).$$
Using Lemma \ref{DSAZ1}, let us define the bi-module map $T_{\mathbf{n}}: \mathbb{E}(\mathbf{n}) \to B(\mathcal{H})$  by $$T_{\mathbf{n}}(\xi)h=\wT_{\mathbf{n}}(\xi \otimes h) \quad \mbox{for}\quad ~ \xi \in \mathbb{E}(\mathbf{n}), h \in \mathcal{H}.$$
Define the {\it Fock module} of $\mathbb{E}$ over $\mathbb{N}^k_0$ by
$\mathcal{F}(\mathbb{E}):=\bigoplus_{\mathbf{n} \in \mathbb{N}^k_0}\mathbb{E}(\mathbf{n}).$ Note that $\mathcal{F}(\mathbb{E})$ is a $C^*$-correspondence with the left action $\phi_{\infty}$ of $\mathcal{M}$ on $\mathcal{F}(\mathbb{E})$ given by $$\phi_{\infty}(b)(\bigoplus_{{\bf{n}} \in \mathbb{N}_0^{k}} \xi_{\bf{n}}) = \bigoplus_{{\bf{n}} \in \mathbb{N}_0^{k}} b\xi_{\bf{n}}, \: b\in \mathcal{M},\xi_{\bf{n}} \in \mathbb{E}(\bf{n}).$$ 
Let $\pi$ be a representation of $\mathcal{M}$ on a Hilbert space $\mathcal{K}.$ For $ i\in \{1, \cdots,k\},$ define an isometric covariant representation $(\rho, S^{(i)})$ of $E_i$ on the Hilbert space $\mathcal{F}(\mathbb{E})\otimes_{\pi} \mathcal{K}$ (cf. \cite{SZ08}) by  $$\rho(a):=\phi_{\infty}(a) \otimes I_{\mathcal{K}}, \:\:a \in \mathcal{M} \quad \mbox{and}\quad S^{(i)}(\xi_i)=T_{{\xi}_i} \otimes I_{\mathcal{K}},\:  \xi_i \in E_i,$$ where $T_{\xi_i}$  denotes the creation operator on $\mathcal{F}(\mathbb{E})$ determined by $\xi_i.$ We say that
$(\rho,S^{(1)}, \dots ,S^{(k)})$ is an induced representation of $\be$ over $\mathbb{N}^k_0$ induced by $\pi.$

Suppose $u=\{u_1,u_2,\dots, u_r\}\subseteq \{1,2,\dots,k\}$ such that $u_1< u_2< \dots < u_r.$ Define $\mathbf{e}(u):=\mathbf{e}_{u_1}+\mathbf{e}_{u_2}+\dots+\mathbf{e}_{u_r},$  where $\mathbf{e}_{i}:=(0,0,\dots,1,0,\dots 0),$ i.e., $1$ at $i^{th}$ place and $0$ otherwise. Next we define the following Brehmer-Solel condition:
\begin{definition}
	Let $(\sigma, T^{(1)}, T^{(2)},\dots, T^{(k)})$ be  a completely contractive representation of $\mathbb{E}$ over $\mathbb{N}^k_0$ on $\mathcal{H}.$ We say that it
	satisfies {\rm Brehmer-Solel condition} (cf. \cite{S08}) if it satisfies the condition 
	\begin{align*}
		\sum_{u \subseteq \{1,2,\dots,k\}}(-1)^{|u|}\widetilde{V}_{\mathbf{e}(u)}\widetilde{V}_{\mathbf{e}(u)}^{*}\geq 0.
	\end{align*}
\end{definition}

The following theorem gave an isometric dilation of completely contractive covariant representations of product system due to Rohilla, Trivedi and Veerabathiran \cite[Theorem 3.5]{RTV}: 

\begin{theorem}\label{6}
	Let $(\sigma, T^{(1)}, T^{(2)},\dots, T^{(k)})$  be a pure completely contractive covariant representation of $\mathbb{E}$ on $\mathcal{H},$ satisfying Brehmer-Solel condition. Then there exist a Hilbert space $\mathcal{D}_{*,k}( T)$ and an isometry $\Pi: \mathcal{H} \to \mathcal{F}(\mathbb{E})\otimes_{\sigma}\mathcal{D}_{*,k}( T)$ such that \begin{align*}
		\Pi\sigma(a)=\rho(a)\Pi \:\:\:\:\mbox{and}\:\:\:\: (I_{E_{i}} \ot \Pi)\widetilde{T}^{(i)*}=\widetilde{S}^{(i)*}\Pi \quad \mbox{for}\quad a \in \mathcal{M}, 1\le i\le k. 
	\end{align*}
	We say that the induced representation $(\rho,S^{(1)},\dots,S^{(k)})$ of $\mathbb{E}$ induced by $\sigma|_{\mathcal{D}_{*,T}}$ is an {\rm isometric dilation} of $(\sigma, T^{(1)},\dots, T^{(k)}).$
\end{theorem}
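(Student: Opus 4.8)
The plan is to imitate, over the whole lattice $\mathbb{N}_0^k$, the Poisson-kernel construction behind Theorem~\ref{dilation}, with the single defect $\Delta_*(T)$ replaced by the Brehmer--Solel defect. I would set
\[
\Delta := \Big(\sum_{u\subseteq\{1,\dots,k\}}(-1)^{|u|}\,\widetilde{T}_{\mathbf e(u)}\widetilde{T}_{\mathbf e(u)}^{*}\Big)^{1/2},
\]
which is a positive operator on $\mathcal H$ precisely because $(\sigma,T^{(1)},\dots,T^{(k)})$ satisfies the Brehmer--Solel condition (for $k=1$ this is exactly $\Delta_*(T)$). Each $\widetilde{T}_{\mathbf e(u)}\widetilde{T}_{\mathbf e(u)}^{*}$ lies in $\sigma(\mathcal M)'$, since $\widetilde{T}_{\mathbf n}(\phi_{\mathbf n}(a)\otimes I)=\sigma(a)\widetilde{T}_{\mathbf n}$, so $\Delta\in\sigma(\mathcal M)'$ and $\mathcal D_{*,k}(T):=\overline{\operatorname{Im}\Delta}$ is $\sigma$-reducing; I would restrict $\sigma$ to it, keeping the name as in the text. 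Then I would define
\[
\Pi\colon\mathcal H\longrightarrow\mathcal F(\mathbb E)\otimes_{\sigma}\mathcal D_{*,k}(T),\qquad \Pi h:=\sum_{\mathbf n\in\mathbb{N}_0^{k}}\big(I_{\mathbb E(\mathbf n)}\otimes\Delta\big)\widetilde{T}_{\mathbf n}^{*}h .
\]
Since the $\mathbf n$-th component of $\Pi h$ lies in $\mathbb E(\mathbf n)\otimes_{\sigma}\mathcal D_{*,k}(T)$, the components are pairwise orthogonal, so convergence of the series, contractivity, and the isometry property of $\Pi$ are all controlled by the single operator identity
\[
\sum_{\mathbf n\in\mathbb{N}_0^{k}}\widetilde{T}_{\mathbf n}\big(I_{\mathbb E(\mathbf n)}\otimes\Delta^{2}\big)\widetilde{T}_{\mathbf n}^{*}=I_{\mathcal H}\quad(\text{SOT}).
\]

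To establish that identity I would first record the product-system relation $\widetilde{T}_{\mathbf n}(I_{\mathbb E(\mathbf n)}\otimes\widetilde{T}_{\mathbf m})=\widetilde{T}_{\mathbf n+\mathbf m}(\beta_{\mathbf n,\mathbf m}\otimes I_{\mathcal H})$, where $\beta_{\mathbf n,\mathbf m}\colon\mathbb E(\mathbf n)\otimes\mathbb E(\mathbf m)\to\mathbb E(\mathbf n+\mathbf m)$ is the canonical unitary built from the $t_{i,j}$; this is obtained by iterating the commutation relations for a covariant representation of $\mathbb E$. It gives $\widetilde{T}_{\mathbf n}(I\otimes\Delta^{2})\widetilde{T}_{\mathbf n}^{*}=\sum_{u}(-1)^{|u|}\widetilde{T}_{\mathbf n+\mathbf e(u)}\widetilde{T}_{\mathbf n+\mathbf e(u)}^{*}$. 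Writing $Q_{\mathbf m}:=\widetilde{T}_{\mathbf m}\widetilde{T}_{\mathbf m}^{*}$ and $D_if(\mathbf n):=f(\mathbf n)-f(\mathbf n+\mathbf e_i)$, the right-hand side equals $(D_1\cdots D_kQ)(\mathbf n)$, so summing over the box $\{0,\dots,N\}^{k}$ telescopes, one coordinate at a time, to $\sum_{v\subseteq\{1,\dots,k\}}(-1)^{|v|}Q_{\mathbf c_N(v)}$, where $\mathbf c_N(v)$ has entry $N+1$ in the coordinates in $v$ and $0$ elsewhere. These box-partial-sums are increasing in $N$ (all summands are positive, by Brehmer--Solel) and bounded in norm by $2^{k}$, hence converge in SOT. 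The term $v=\varnothing$ is $Q_{\mathbf 0}=I_{\mathcal H}$ (nondegeneracy of $\sigma$); for $v\neq\varnothing$, choosing $i\in v$ and moving the $E_i$-block to the front one factors $\widetilde{T}_{\mathbf c_N(v)}=\widetilde{T}^{(i)}_{N+1}\big(I_{E_i^{\otimes(N+1)}}\otimes\widetilde{T}_{\mathbf c_N(v)-(N+1)\mathbf e_i}\big)$, whence $0\le Q_{\mathbf c_N(v)}\le\widetilde{T}^{(i)}_{N+1}\big(\widetilde{T}^{(i)}_{N+1}\big)^{*}\to 0$ in SOT by purity of $(\sigma,T^{(i)})$. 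Thus the limit is $I_{\mathcal H}$; and since the full net of partial sums over finite subsets of $\mathbb{N}_0^{k}$ is increasing, bounded above by $I_{\mathcal H}$, and below by the box-partial-sums, it too converges in SOT to $I_{\mathcal H}$. Consequently $\|\Pi h\|^{2}=\sum_{\mathbf n}\langle\widetilde{T}_{\mathbf n}(I\otimes\Delta^{2})\widetilde{T}_{\mathbf n}^{*}h,h\rangle=\|h\|^{2}$, so $\Pi$ is an isometry.

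For the intertwining relations, $\Pi\sigma(a)=\rho(a)\Pi$ follows termwise from $\widetilde{T}_{\mathbf n}^{*}\sigma(a)=(\phi_{\mathbf n}(a)\otimes I)\widetilde{T}_{\mathbf n}^{*}$, the commutation of $I_{\mathbb E(\mathbf n)}\otimes\Delta$ with $\phi_{\mathbf n}(a)\otimes I$ (different legs), and the fact that $\rho(a)$ on the $\mathbf n$-th summand of $\mathcal F(\mathbb E)\otimes_\sigma\mathcal D_{*,k}(T)$ is $\phi_{\mathbf n}(a)\otimes I$. For $(I_{E_i}\otimes\Pi)\widetilde{T}^{(i)*}=\widetilde{S}^{(i)*}\Pi$ I would compute both sides on $h$ and match their components in $E_i\otimes(\mathbb E(\mathbf m)\otimes_{\sigma}\mathcal D_{*,k}(T))$: the left side produces $\sum_{\mathbf m}(I\otimes\Delta)(I_{E_i}\otimes\widetilde{T}_{\mathbf m})^{*}\widetilde{T}^{(i)*}h$, which by $\widetilde{T}^{(i)}(I_{E_i}\otimes\widetilde{T}_{\mathbf m})=\widetilde{T}_{\mathbf m+\mathbf e_i}(\beta_{\mathbf e_i,\mathbf m}\otimes I)$ becomes $\sum_{\mathbf m}(I\otimes\Delta)(\beta_{\mathbf e_i,\mathbf m}^{*}\otimes I)\widetilde{T}_{\mathbf m+\mathbf e_i}^{*}h$; the right side vanishes on the summands of $\Pi h$ with $n_i=0$ and on the $(\mathbf m+\mathbf e_i)$-summand it is exactly the canonical identification $\mathbb E(\mathbf m+\mathbf e_i)\otimes_{\sigma}\mathcal D_{*,k}(T)\cong E_i\otimes(\mathbb E(\mathbf m)\otimes_{\sigma}\mathcal D_{*,k}(T))$ adjoint to the creation operator $S^{(i)}$, and carrying $\Delta$ through it yields the same expression. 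Taking adjoints of the two identities gives $\sigma(a)\Pi^{*}=\Pi^{*}\rho(a)$ and $T^{(i)}(\xi_i)\Pi^{*}=\Pi^{*}S^{(i)}(\xi_i)$, so $(\rho,S^{(1)},\dots,S^{(k)})$ is an isometric dilation of $(\sigma,T^{(1)},\dots,T^{(k)})$.

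The heart of the proof, and the main obstacle, is the operator identity $\sum_{\mathbf n}\widetilde{T}_{\mathbf n}(I\otimes\Delta^{2})\widetilde{T}_{\mathbf n}^{*}=I_{\mathcal H}$: this is where both hypotheses are consumed — Brehmer--Solel positivity is what makes $\Delta$ and the monotonicity of the telescoped partial sums meaningful, and purity of each $(\sigma,T^{(i)})$ is precisely what annihilates the boundary terms $Q_{\mathbf c_N(v)}$ with $v\neq\varnothing$ — and it requires threading the product-system identifications $t_{i,j}$ consistently through a multivariable inclusion--exclusion/telescoping. Once that identity, together with the graded descriptions of $\mathcal F(\mathbb E)\otimes_{\sigma}\mathcal D_{*,k}(T)$ and of $\widetilde{S}^{(i)*}$, is in hand, the intertwining relations are a routine bookkeeping exercise.
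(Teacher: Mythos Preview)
The paper does not prove this theorem; it is quoted verbatim from \cite[Theorem~3.5]{RTV} and used as background. So there is no in-paper proof to compare against.

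That said, your proposal is correct and is precisely the natural Poisson-kernel argument one expects to underlie the cited result. The only points worth flagging are minor: (i) in the factorization $\widetilde T_{\mathbf c_N(v)}=\widetilde T^{(i)}_{N+1}\bigl(I\otimes\widetilde T_{\mathbf c_N(v)-(N+1)\mathbf e_i}\bigr)$ there is an implicit product-system unitary $\beta$ which you suppress, but it cancels in $Q_{\mathbf c_N(v)}=\widetilde T_{\mathbf c_N(v)}\widetilde T_{\mathbf c_N(v)}^{*}$, so the bound $Q_{\mathbf c_N(v)}\le\widetilde T^{(i)}_{N+1}\bigl(\widetilde T^{(i)}_{N+1}\bigr)^{*}$ is legitimate; (ii) the chain ``box sums increasing, bounded by $2^k$, hence SOT-convergent; limit is $I$; hence each box sum $\le I$; hence every finite partial sum $\le I$; hence the full net converges to $I$'' is exactly right, though you might state the step ``increasing with limit $I$ implies each term $\le I$'' explicitly. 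The intertwining computations are routine once the graded description of $\widetilde S^{(i)*}$ via $\beta_{\mathbf e_i,\mathbf m}^{*}$ is fixed, as you indicate.
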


\section{Ando type dilation for completely contractive
	representations of product system over $\mathbb N^2_0$}

This section aims to construct an explicit isometric dilation of completely contractive covariant representations of product system over $\mathbb{N}_0^2.$ In particular, the isometric dilation of completely contractive covariant representation with finite defect indices, as this will play a key role in the subsequent analysis.

Assume $\be$ to be a product system over $\mathbb N^2_0$. Let $(\sigma, T^{(1)}, T^{(2)})$ be a completely contractive covariant representation  of $\be$ on $\mathcal H.$ Define $\widetilde T:=\wT^{(1)} (I_{E_1} \ot \wT^{(2)}),$ then by Lemma \ref{DSAZ1} $(\sigma, T)$ is also a completely contractive covariant represenation of $E_1 \ot E_2$ on $\mathcal{ H}.$ Note that
\begin{align*}
	&I_{\mathcal H}- \wT^{(1)}
	\wT^{(1)^*}+\wT^{(1)} (I_{E_1} \ot (I_{\mathcal H}-\wT^{(2)}\wT^{(2)^*}))\wT^{(1)^*}\\&=
	I_{\mathcal H}-\wT^{(1)} (I_{E_1} \ot \wT^{(2)})(I_{E_1} \ot \wT^{(2)^*})\wT^{(1)^*}=	I_{\mathcal H}-{\widetilde T}{\widetilde T^*}\\&=	I_{\mathcal H}- \wT^{(2)}
	\wT^{(2)^*}+\wT^{(2)} (I_{E_2} \ot (I_{\mathcal H}-\wT^{(1)}\wT^{(1)^*}))\wT^{(2)^*}.
\end{align*}
This implies that, for $h\in \mathcal{ H}$
\begin{align*}
	&\|\Delta_*(T^{(1)})h\|^2+\|(I_{E_1} \ot \Delta_*(T^{(2)}))\wT^{(1)^*}h\|^2
	\\&=\|(I_{E_2} \ot \Delta_*(T^{(1)}))\wT^{(2)^*}h\|^2+\|\Delta_*(T^{(2)})h\|^2.
\end{align*}
Therefore we get an isometry \begin{align*}
	U:\{\Delta_*(T^{(1)})h\oplus (I_{E_1} \ot \Delta_*(T^{(2)}))\wT^{(1)^*}h:~h\in \mathcal H\}\to\\ \{(I_{E_2} \ot \Delta_*(T^{(1)}))\wT^{(2)^*}h\oplus \Delta_*(T^{(2)})h:~h\in \mathcal H\}
\end{align*}
defined by
\begin{align}\label{equation1} 
	\nonumber	&U(\Delta_*(T^{(1)})h,  (I_{E_1} \ot \Delta_*(T^{(2)}))\wT^{(1)^*}h)\\&=((I_{E_2} \ot \Delta_*(T^{(1)}))\wT^{(2)^*}h, \Delta_*(T^{(2)})h),
\end{align} $h\in \mathcal H.$ Let  $\phi_1$ and $\phi_2$ be the left actions of $\mathcal{M}$ on $E_1$ and $E_2,$ respectively. It is easy to check that for $a \in \mathcal{M}$ and $ h \in \mathcal{H},$ $U$ is a module map, that is,
\begin{align*}
	U\rho^{'}(a)=\rho^{''}(a)U,
\end{align*} where $\rho^{'}(a)=\big(\sigma(a)|_{\mathcal{D}_{*, T^{(1)}}}, \phi_{1}(a)\ot I_{\mathcal{D}_{*, T^{(2)}}}\big)$ and $\rho^{''}(a)=\big( \phi_2(a)\ot I_{\mathcal{D}_{*, T^{(1)}}}, \sigma(a)|_{\mathcal{D}_{*, T^{(2)}}} \big)$ are the left actions of $\mathcal{M}$ on the Hilbert spaces $\mathcal{D}_{*, T^{(1)}} \oplus  (E_1 \ot\mathcal{D}_{*, T^{(2)}})$ and   $(E_2 \ot \mathcal{D}_{*, T^{(1)}}) \oplus  \mathcal{D}_{*, T^{(2)}},$ respectively. Indeed, for $h\in \mathcal{H}$ we have 

\begin{align*}
	&U\rho^{'}(a)(\Delta_*(T^{(1)})h,  (I_{E_1} \ot \Delta_*(T^{(2)}))\wT^{(1)^*}h)\\&=U(\sigma(a)\Delta_*(T^{(1)})h, (\phi_{1}(a)\ot I_{\mathcal{D}_{*, T^{(2)}}})(I_{E_1} \ot \Delta_*(T^{(2)}))\wT^{(1)^*}h)\\&=U(\Delta_*(T^{(1)})\sigma(a)h, (I_{E_1} \ot \Delta_*(T^{(2)}))(\phi_{1}(a)\ot I_{\mathcal{H}})\wT^{(1)^*}h)\\&=U(\Delta_*(T^{(1)})\sigma(a)h, (I_{E_1} \ot \Delta_*(T^{(2)}))\wT^{(1)^*}\sigma(a)h)\\&=((I_{E_2} \ot \Delta_*(T^{(1)}))\wT^{(2)^*}\sigma(a)h, \Delta_*(T^{(2)})\sigma(a)h)\\&=((\phi_{2}(a)\ot I_{\mathcal{D}_{*, T^{(1)}}})(I_{E_2} \ot \Delta_*(T^{(1)}))\wT^{(2)^*}h, \sigma(a)\Delta_*(T^{(2)})h)\\&=\rho^{''}(a)U(\Delta_*(T^{(1)})h,  (I_{E_1} \ot \Delta_*(T^{(2)}))\wT^{(1)^*}h)
\end{align*}
Moreover, if  $ \dim (\mathcal{D}_{*, T^{(1)}} \oplus  (E_1 \ot\mathcal{D}_{*, T^{(2)}}))$ =  $ \dim ((E_2 \ot \mathcal{D}_{*, T^{(1)}}) \oplus  \mathcal{D}_{*, T^{(2)}} )$ which is finite, it follows that $U$ extends to a
unitary module map, denoted again by $U,$ from $\mathcal{D}_{*, T^{(1)}} \oplus  (E_1 \ot\mathcal{D}_{*, T^{(2)}})$ to $(E_2 \ot \mathcal{D}_{*, T^{(1)}}) \oplus  \mathcal{D}_{*, T^{(2)}}. $ In
particular, there exists a unitary 
\begin{equation*}
	U: = \begin{bmatrix}A&B\\C&\widetilde{D}\end{bmatrix}:
	\mathcal{D}_{*, T^{(1)}} \oplus  (E_1 \ot\mathcal{D}_{*, T^{(2)}})\to (E_2 \ot \mathcal{D}_{*, T^{(1)}}) \oplus  \mathcal{D}_{*, T^{(2)}},
\end{equation*}
and it satisfies Equation (\ref{equation1}), where $ A:\mathcal{D}_{*, T^{(1)}} \to E_2 \ot \mathcal{D}_{*, T^{(1)}},$ $ B:E_1 \ot \mathcal{D}_{*, T^{(2)}} \to E_2 \ot \mathcal{D}_{*, T^{(1)}},$ $ C:\mathcal{D}_{*, T^{(1)}} \to \mathcal{D}_{*, T^{(2)}}$ and $ \widetilde{D} : E_1 \ot\mathcal{D}_{*, T^{(2)}}\to  \mathcal{D}_{*, T^{(2)}}.$ 
For $\eta \in \mathcal{D}_{*, T^{(1)}} \oplus  (E_1 \ot\mathcal{D}_{*, T^{(2)}})$ we have
\begin{equation}\label{inner}
	U\rho^{'}(a)\eta =\rho^{''}(a)U\eta.
\end{equation}
Therefore $U$ is a module map, and hence each entry of $U$ is also a module map. That is, $A,B,C$ and $\widetilde{D}$ are module maps. In particular, $A=P_{E_2 \ot \mathcal{D}_{*, T^{(1)}}}U|_{\mathcal{D}_{*, T^{(1)}}}:\mathcal{D}_{*, T^{(1)}} \to E_2 \ot \mathcal{D}_{*, T^{(1)}} ,$  we have $A\sigma(a)|_{\mathcal{D}_{*, T^{(1)}}}=(\phi_2(a)\ot I_{\mathcal{D}_{*, T^{(1)}}})A$ for $a\in \mathcal{M}.$ That is,\begin{equation*}
	\sigma(a)|_{\mathcal{D}_{*, T^{(1)}}}A^{*}=A^{*}(\phi_2(a)\ot I_{\mathcal{D}_{*, T^{(1)}}}),
\end{equation*} where $P_{E_2 \ot \mathcal{D}_{*, T^{(1)}}}$ is the orthogonal projection of $E_2\ot \mathcal{ H}$ onto $E_2 \ot \mathcal{D}_{*, T^{(1)}}.$ Indeed, by using Equation (\ref{inner}) for $h\in \mathcal{D}_{*, T^{(1)}}$ we obtain \begin{align*}\label{A}
	A\sigma(a)h=P_{E_2 \ot \mathcal{D}_{*, T^{(1)}}}U\sigma(a)h&=P_{E_2 \ot \mathcal{D}_{*, T^{(1)}}}(\phi_2(a)\ot I_{\mathcal{D}_{*, T^{(1)}}})Uh\\&=(\phi_2(a)\ot I_{\mathcal{D}_{*, T^{(1)}}})Ah.
\end{align*}

From Lemma \ref{DSAZ1}, we define a completely contractive covariant representation $(\psi_1, X)$ of $E_2$ on the Hilbert space $\mathcal{D}_{*, T^{(1)}}$ by $$X(\xi_2)h:=A^*(\xi_2\ot h),$$ where $\xi_2\in E_2, h\in \mathcal{D}_{*, T^{(1)}}$ and $\psi_1(a):=\sigma(a)|_{\mathcal{D}_{*, T^{(1)}}}, a\in \mathcal{M}.$ Similarly, $(\sigma_1,D)$ is a completely contractive covariant representation of $E_1$ on the Hilbert space $\mathcal{D}_{*, T^{(2)}},$ where $\sigma_1(a):=\sigma(a)|_{\mathcal{D}_{*, T^{(2)}}}, a\in \mathcal{M}.$

The following lemma is crucial to our discussion.

\begin{lemma}\label{identity}
	Assume $\be$ to be a product system over $\mathbb N^2_0$. Let $(\sigma, T^{(1)}, T^{(2)})$ be a completely contractive covariant representation  of $\be$ on $\mathcal H$ such that $(\sigma,T^{(1)})$ is pure and  $ \dim (\mathcal{D}_{*, T^{(1)}} \oplus  (E_1 \ot\mathcal{D}_{*, T^{(2)}}))$ =  $ \dim ((E_2 \ot \mathcal{D}_{*, T^{(1)}}) \oplus  \mathcal{D}_{*, T^{(2)}} )<\infty.$  Then 
	\begin{align*}&
		(I_{E_2} \ot \Delta_*(T^{(1)}))\wT^{(2)^*}= A\Delta_*(T^{(1)})
		\\&\hspace{0.2in}
		+  \sum_{n= 0}^\infty
		B(I_{E_1}\ot \widetilde{D}_n)(I_{E_1^{\otimes n+1}}\ot C\Delta_*(T^{(1)}))\wT^{(1)^*}_{n+1},
	\end{align*}
	where $U=\begin{bmatrix}A&B\\C&\widetilde{D}\end{bmatrix}$ defined as above, $\widetilde{D}_n=\widetilde{D}(I_{E_1} \otimes \widetilde{D}) \dots (I_{E_1^{\ot(n-1)}} \otimes  \widetilde{D})$ and the series converges with respect to the strong operator topology.
\end{lemma}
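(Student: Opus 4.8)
The plan is to start from the unitary relation~(\ref{equation1}) and read off, in block form, the two scalar equations it encodes. Writing an arbitrary source vector as $\big(\Delta_*(T^{(1)})h,\,(I_{E_1}\ot\Delta_*(T^{(2)}))\wT^{(1)^*}h\big)$ and applying $U=\begin{bmatrix}A&B\\C&\widetilde D\end{bmatrix}$, the first component of the image gives
\[
(I_{E_2}\ot\Delta_*(T^{(1)}))\wT^{(2)^*}h
= A\Delta_*(T^{(1)})h + B\big(I_{E_1}\ot\Delta_*(T^{(2)})\big)\wT^{(1)^*}h,
\]
while the second component gives
\[
\Delta_*(T^{(2)})h = C\Delta_*(T^{(1)})h + \widetilde D\big(I_{E_1}\ot\Delta_*(T^{(2)})\big)\wT^{(1)^*}h.
\]
The first identity already isolates the desired left-hand side; everything reduces to expanding the term $B\big(I_{E_1}\ot\Delta_*(T^{(2)})\big)\wT^{(1)^*}h$ into the stated series.

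The key step is an iteration on the second identity. Replace $h$ by a vector of the form $\wT^{(1)}(\xi_1\ot h')$ (equivalently, tensor with $I_{E_1}$ and apply $\wT^{(1)^*}$ on the left), using that $\wT^{(1)^*}\wT^{(1)}=I$ on $\mathcal D_{*,T^{(1)}}$ is \emph{not} available directly, but rather that after tensoring by $I_{E_1}$ the operator $(I_{E_1}\ot\Delta_*(T^{(2)}))\wT^{(1)^*}$ applied to $\wT^{(1)^*}h$ produces $(I_{E_1}\ot\Delta_*(T^{(2)}))(I_{E_1}\ot\wT^{(1)^*})\wT^{(1)^*}_{\,}h$; more precisely I will substitute the second identity (tensored by $I_{E_1}$, with $h$ replaced by $\wT^{(1)^*}h$) into the first, obtaining
\[
(I_{E_2}\ot\Delta_*(T^{(1)}))\wT^{(2)^*} = A\Delta_*(T^{(1)}) + B(I_{E_1}\ot C)(I_{E_1}\ot\Delta_*(T^{(1)}))\wT^{(1)^*}_{2}\big/\!\sim + B(I_{E_1}\ot\widetilde D)(I_{E_1}\ot(I_{E_1}\ot\Delta_*(T^{(2)})))(I_{E_1}\ot\wT^{(1)^*})\wT^{(1)^*}.
\]
Iterating the substitution $n$ times on the residual $\widetilde D$-term — and using the tower relation $\wT^{(1)}_{n+1}=\wT^{(1)}(I_{E_1}\ot\wT^{(1)}_n)$ together with $\widetilde D_{n+1}=\widetilde D(I_{E_1}\ot\widetilde D_n)$ — gives, after $N$ steps, a finite sum $\sum_{n=0}^{N-1}B(I_{E_1}\ot\widetilde D_n)(I_{E_1^{\ot n+1}}\ot C\Delta_*(T^{(1)}))\wT^{(1)^*}_{n+1}$ plus a remainder term of the form $B(I_{E_1}\ot\widetilde D_N)(I_{E_1^{\ot N}}\ot(I_{E_1}\ot\Delta_*(T^{(2)})))(I_{E_1^{\ot N}}\ot\wT^{(1)^*})\wT^{(1)^*}_{N}$.

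It remains to show the remainder converges to $0$ in the strong operator topology, and this is where purity of $(\sigma,T^{(1)})$ enters — this is the main obstacle. Since $\|\widetilde D\|\le 1$ and $\|B\|\le 1$ (both are corners of a unitary), the remainder is bounded in norm by $\|(I_{E_1^{\ot N}}\ot\wT^{(1)^*})\wT^{(1)^*}_N h\| = \|\wT^{(1)^*}_{N+1}h\|$ up to the contractive factors, and purity says exactly that $\wT^{(1)}_n\wT^{(1)^*}_n\to 0$ strongly, i.e. $\|\wT^{(1)^*}_n h\|\to 0$ for every $h$; hence the remainder vanishes in SOT and the partial sums converge. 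Convergence of the series itself then follows from the same estimate, since the tail $\sum_{n\ge N}$ is controlled by $\|\wT^{(1)^*}_{N+1}h\|\to 0$. The only bookkeeping subtlety is keeping the tensor-leg indices straight when commuting $I_{E_1}\ot(-)$ past $\wT^{(1)^*}$ and repeatedly applying $\wT^{(1)}_{n+1}=\wT^{(1)}(I_{E_1}\ot\wT^{(1)}_n)$; once the indexing is fixed the algebra is routine.
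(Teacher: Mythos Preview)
Your proposal is correct and follows essentially the same route as the paper: read off the two block equations from $U$, substitute the second (tensored by $I_{E_1}$ and evaluated at $\wT^{(1)^*}h$) iteratively into the first to obtain the partial sum plus a remainder $B(I_{E_1}\ot\widetilde D_{N})(I_{E_1^{\ot N+1}}\ot\Delta_*(T^{(2)}))\wT^{(1)^*}_{N+1}$, and then use $\|B\|,\|\widetilde D\|\le 1$ together with purity of $(\sigma,T^{(1)})$ to kill the remainder in SOT. Your minor indexing slips (e.g.\ the $\wT^{(1)^*}_{2}$ in the first displayed substitution should be $\wT^{(1)^*}$, corresponding to the $n=0$ term with $\widetilde D_0=I$) are precisely the bookkeeping you already flagged and do not affect the argument.
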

\begin{proof}
	Let $h \in \mathcal H$ we get
	\[\begin{bmatrix}A&B\\C&\widetilde{D}\end{bmatrix} \begin{bmatrix} \Delta_*(T^{(1)})h\\ (I_{E_1} \ot \Delta_*(T^{(2)}))\wT^{(1)^*}h \end{bmatrix}= \begin{bmatrix} (I_{E_2} \ot \Delta_*(T^{(1)}))\wT^{(2)^*}h \\  \Delta_*(T^{(2)})h\end{bmatrix}.\] It follows that
	\begin{equation} \label{one} (I_{E_2} \ot \Delta_*(T^{(1)}))\wT^{(2)^*}h= A\Delta_*(T^{(1)})h+
		B(I_{E_1} \ot \Delta_*(T^{(2)}))\wT^{(1)^*}h
	\end{equation}and
	\begin{equation*}
		\Delta_*(T^{(2)})h= C\Delta_*(T^{(1)})h+
		\widetilde{D}(I_{E_1} \ot \Delta_*(T^{(2)}))\wT^{(1)^*}h.
	\end{equation*} This implies that
	\begin{align*}
		&(I_{E_1}\ot \Delta_*(T^{(2)}))\wT^{(1)^*}h= (I_{E_1}\ot C\Delta_*(T^{(1)}))\wT^{(1)^*}h+\\& \quad \quad
		(I_{E_1}\ot \widetilde{D}(I_{E_1} \ot \Delta_*(T^{(2)}))\wT^{(1)^*})\wT^{(1)^*}h,	
	\end{align*}
	and further we get\begin{align*}
		&(I_{E_1^{\otimes 2}}\ot \Delta_*(T^{(2)}))\wT^{(1)^*}_2h\\&= (I_{E_1^{\otimes 2}}\ot C\Delta_*(T^{(1)}))\wT^{(1)^*}_2h+
		(I_{E_1^{\otimes 2}}\ot \widetilde{D}(I_{E_1} \ot \Delta_*(T^{(2)}))\wT^{(1)^*})\wT^{(1)^*}_2h,
	\end{align*} and hence
	\begin{align*}
		&(I_{E_1^{\otimes 2}}\ot \Delta_*(T^{(2)}))\wT^{(1)^*}_2h\\&= (I_{E_1^{\otimes 2}}\ot C\Delta_*(T^{(1)}))\wT^{(1)^*}_2h+
		(I_{E_1^{\otimes 2}}\ot \widetilde{D}(I_{E_1} \ot \Delta_*(T^{(2)})))\wT^{(1)^*}_3h.
	\end{align*}From Equation ~\eqref{one} we obtain
	\[\begin{split}&
		(I_{E_2} \ot \Delta_*(T^{(1)}))\wT^{(2)^*}h\\&= A\Delta_*(T^{(1)})h+
		B(I_{E_1} \ot \Delta_*(T^{(2)}))\wT^{(1)^*}h \\ & =  A\Delta_*(T^{(1)})h+
		B(I_{E_1}\ot C\Delta_*(T^{(1)}))\wT^{(1)^*}h   \\& \quad \quad +
		B(I_{E_1}\ot \widetilde{D}(I_{E_1} \ot \Delta_*(T^{(2)}))\wT^{(1)^*})\wT^{(1)^*}h
		\\ & =  A\Delta_*(T^{(1)})h+
		B(I_{E_1}\ot C\Delta_*(T^{(1)}))\wT^{(1)^*}h  \\&  \quad \quad +
		B(I_{E_1}\ot \widetilde{D})(I_{E_1^{\otimes 2}} \ot \Delta_*(T^{(2)}))\wT^{(1)^*}_2h
		\\&=A\Delta_*(T^{(1)})h+
		B(I_{E_1}\ot C\Delta_*(T^{(1)}))\wT^{(1)^*}h  \\&  \quad \quad +
		B(I_{E_1}\ot \widetilde{D})(I_{E_1^{\otimes 2}}\ot C\Delta_*(T^{(1)}))\wT^{(1)^*}_2h \\& \quad \quad +
		B(I_{E_1}\ot \widetilde{D})
		(I_{E_1^{\otimes 2}}\ot \widetilde{D}(I_{E_1} \ot \Delta_*(T^{(2)})))\wT^{(1)^*}_3h 
		\\&=A\Delta_*(T^{(1)})h+
		B(I_{E_1}\ot C\Delta_*(T^{(1)}))\wT^{(1)^*}h  \\& \quad \quad +
		B(I_{E_1}\ot \widetilde{D})(I_{E_1^{\otimes 2}}\ot C\Delta_*(T^{(1)}))\wT^{(1)^*}_2h  \\& \quad \quad +
		B(I_{E_1}\ot \widetilde{D})
		(I_{E_1^{\otimes 2}}\ot \widetilde{D})(I_{E_1^{\otimes 3}}\ot\Delta_*(T^{(2)}))\wT^{(1)^*}_3h\\&=A\Delta_*(T^{(1)})h+
		B(I_{E_1}\ot C\Delta_*(T^{(1)}))\wT^{(1)^*}h  \\& \quad \quad +
		B(I_{E_1}\ot \widetilde{D})(I_{E_1^{\otimes 2}}\ot C\Delta_*(T^{(1)}))\wT^{(1)^*}_2h  \\& \quad \quad +
		B(I_{E_1}\ot \widetilde{D}(I_{E_1}\ot \widetilde{D}))
		(I_{E_1^{\otimes 3}}\ot\Delta_*(T^{(2)}))\wT^{(1)^*}_3h\\&=A\Delta_*(T^{(1)})h+
		B(I_{E_1}\ot C\Delta_*(T^{(1)}))\wT^{(1)^*}h  \\& \quad \quad +
		B(I_{E_1}\ot \widetilde{D})(I_{E_1^{\otimes 2}}\ot C\Delta_*(T^{(1)}))\wT^{(1)^*}_2h \\& \quad \quad +
		B(I_{E_1}\ot \widetilde{D}_{2})
		(I_{E_1^{\otimes 3}}\ot\Delta_*(T^{(2)}))\wT^{(1)^*}_3h.
	\end{split}\] 
	Therefore iteratively for each $m\in\mathbb N$, we have
	\begin{align*}
		(I_{E_2} \ot \Delta_*(T^{(1)}))\wT^{(2)^*}h= &  A\Delta_*(T^{(1)})h
		\\& +\sum_{n= 0}^m 
		B(I_{E_1}\ot \widetilde{D}_{n})(I_{E_1^{\otimes n+1}}\ot C\Delta_*(T^{(1)}))\wT^{(1)^*}_{n+1}h  \\&+ 
		B(I_{E_1}\ot \widetilde{D}_{m+1})(I_{E_1^{\otimes m+2}}\ot \Delta_*(T^{(1)}))\wT^{(1)^*}_{m+2}h.
	\end{align*}
	Since $(\sigma,T^{(1)})$ is pure and $\|\widetilde{D}\| \leq 1,$ it gives
	\begin{align*}
		\|(I_{E_2} \ot & \Delta_*(T^{(1)}))\wT^{(2)^*}h - A\Delta_*(T^{(1)})h-\\& \sum_{n= 0}^m 
		B(I_{E_1}\ot \widetilde{D}_{n})(I_{E_1^{\otimes n+1}}\ot C\Delta_*(T^{(1)}))\wT^{(1)^*}_{n+1}h\|\\& =\|B(I_{E_1}\ot \widetilde{D}_{m+1})(I_{E_1^{\otimes m+2}}\ot \Delta_*(T^{(1)}))\wT^{(1)^*}_{m+2}h\| \\&
		\leq\|\wT^{(1)^*}_{m+2}h\|  \to 0~\mbox{as}~m\to\infty.
	\end{align*}  This complets the proof.
\end{proof}

\begin{observation}
	Under the above assumptions, the conclusion of Lemma \ref{identity} remains valid even when we incorporate the possibility
	$ \dim (\mathcal{D}_{*, T^{(1)}} \oplus  (E_1 \ot\mathcal{D}_{*, T^{(2)}}))$ or $ \dim ((E_2 \ot \mathcal{D}_{*, T^{(1)}}) \oplus  \mathcal{D}_{*, T^{(2)}} )$ is an infinite dimensional space. The conclusion of Lemma \ref{identity} remains valid. If  $ \dim (\mathcal{D}_{*, T^{(1)}} \oplus  (E_1 \ot\mathcal{D}_{*, T^{(2)}}))=\infty$ or $ \dim ((E_2 \ot \mathcal{D}_{*, T^{(1)}}) \oplus  \mathcal{D}_{*, T^{(2)}} )=\infty,$ 
	then there exists an infinite dimensional 
	Hilbert space $\mathcal{D}$ such that the isometry 
	\begin{align*}&
		U:\{\Delta_*(T^{(1)})h\oplus (I_{E_1} \ot \Delta_*(T^{(2)}))\wT^{(1)^*}h:~h\in \mathcal H\} \oplus \{0_\mathcal{D}\}\to \\& \:\:\:\:\:\:\:\: \{(I_{E_2} \ot \Delta_*(T^{(1)}))\wT^{(2)^*}h\oplus \Delta_*(T^{(2)})h:~h\in \mathcal H\} \oplus \{0_\mathcal{D}\}
	\end{align*} is defined by 
	\begin{align*} 
		&U(\Delta_*(T^{(1)})h,  (I_{E_1} \ot \Delta_*(T^{(2)}))\wT^{(1)^*}h,0_\mathcal{D})\\&=((I_{E_2} \ot \Delta_*(T^{(1)}))\wT^{(2)^*}h, \Delta_*(T^{(2)})h,0_\mathcal{D}),
	\end{align*} for all $h\in \mathcal H$ 
	extends to a unitary, denoted again by $U,$ from
	$\mathcal{D}_{*, T^{(1)}} \oplus  (E_1 \ot\mathcal{D}_{*, T^{(2)}}) \oplus \mathcal{D}$  to
	$  (E_2 \ot\mathcal{D}_{*, T^{(1)}}) \oplus \mathcal{D}_{*, T^{(2)}} \oplus \mathcal{D}.$ Now we proceed similarly
	with the unitary matrix \begin{equation*}
		U = \begin{bmatrix}A&B\\C&\widetilde{D}\end{bmatrix}:
		\mathcal{D}_{*, T^{(1)}} \oplus  (E_1 \ot\mathcal{D}_{*, T^{(2)}}) \oplus \mathcal{D}\to (E_2 \ot \mathcal{D}_{*, T^{(1)}}) \oplus  \mathcal{D}_{*, T^{(2)}}\oplus \mathcal{D} ,
	\end{equation*} defined by 	\begin{align*} &U(\Delta_*(T^{(1)})h,  (I_{E_1} \ot \Delta_*(T^{(2)}))\wT^{(1)^*}h,0_\mathcal{D})\\&=((I_{E_2} \ot \Delta_*(T^{(1)}))\wT^{(2)^*}h, \Delta_*(T^{(2)})h,0_\mathcal{D}),\end{align*} to obtain the same conclusion as in Lemma \ref{identity}.
\end{observation}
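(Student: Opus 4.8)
The plan is to isolate the single place where finite dimensionality was used in Lemma~\ref{identity} and to see that it can always be arranged. Inspecting that proof, the hypothesis $\dim\big(\mathcal{D}_{*, T^{(1)}} \oplus  (E_1 \ot\mathcal{D}_{*, T^{(2)}})\big)=\dim\big((E_2 \ot \mathcal{D}_{*, T^{(1)}}) \oplus  \mathcal{D}_{*, T^{(2)}} \big)<\infty$ was invoked only to upgrade the isometry $U$ of \eqref{equation1}, defined on the ``graph'' subspace $\{\Delta_*(T^{(1)})h\oplus (I_{E_1} \ot \Delta_*(T^{(2)}))\wT^{(1)^*}h:\ h\in\mathcal H\}$, to a unitary carrying a $2\times 2$ operator-matrix form $\begin{bmatrix}A&B\\ C&\widetilde D\end{bmatrix}$. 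Everything afterwards uses only the two identities read off from that matrix form (Equation~\eqref{one} and its companion), the bound $\|\widetilde D\|\le 1$, the fact that $\widetilde D_n=\widetilde D(I_{E_1}\ot\widetilde D)\cdots(I_{E_1^{\ot(n-1)}}\ot\widetilde D)$ is a well-defined operator, and the purity of $(\sigma,T^{(1)})$. So the task reduces to producing such a colligation when one (or both) of the spaces is infinite dimensional, or when they are finite of unequal dimension.

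First I would amplify. Fix a Hilbert $\mathcal M$-module $\mathcal D$ of infinite dimension large enough to dominate the dimensions of $\mathcal H$, $E_1$, $E_2$, $\mathcal{D}_{*, T^{(1)}}$ and $\mathcal{D}_{*, T^{(2)}}$, and chosen so that $E_1\ot_{\phi_1}\mathcal D\cong\mathcal D$. Put $\widehat{\mathcal D}:=\mathcal{D}_{*, T^{(2)}}\oplus\mathcal D$ and $\widehat\Delta:=\Delta_*(T^{(2)})\oplus 0_{\mathcal D}:\mathcal H\to\widehat{\mathcal D}$; then $\|\widehat\Delta h\|=\|\Delta_*(T^{(2)})h\|$, so the norm identity established before Lemma~\ref{identity} shows that $\Delta_*(T^{(1)})h\oplus(I_{E_1}\ot\widehat\Delta)\wT^{(1)^*}h\mapsto (I_{E_2}\ot\Delta_*(T^{(1)}))\wT^{(2)^*}h\oplus\widehat\Delta h$, $h\in\mathcal H$, is still an isometry; and now both $\mathcal{D}_{*, T^{(1)}} \oplus  (E_1 \ot\widehat{\mathcal D})$ and $(E_2 \ot \mathcal{D}_{*, T^{(1)}}) \oplus  \widehat{\mathcal D}$ have dimension $\dim\mathcal D$, so it extends to a unitary
\[
U=\begin{bmatrix}A&B\\ C&\widetilde D\end{bmatrix}:\ \mathcal{D}_{*, T^{(1)}} \oplus  (E_1 \ot\widehat{\mathcal D})\ \longrightarrow\ (E_2 \ot \mathcal{D}_{*, T^{(1)}}) \oplus  \widehat{\mathcal D},
\]
whose corners have matching domains and codomains, so that $\widetilde D_n:E_1^{\ot n}\ot\widehat{\mathcal D}\to\widehat{\mathcal D}$ is defined. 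Since the $\mathcal D$-coordinate of every graph vector vanishes, Equation~\eqref{one} and its companion hold verbatim with $\widehat\Delta$ in place of $\Delta_*(T^{(2)})$, and repeating the telescoping computation of Lemma~\ref{identity} and letting $m\to\infty$ — where purity of $(\sigma,T^{(1)})$ forces $\|\wT^{(1)^*}_{m+2}h\|\to0$ — yields the asserted expansion, the series converging in the strong operator topology. If one additionally wants $U$ to be a module map (as in the main text, via \eqref{inner}), one takes $\mathcal D$ with enough of an $\mathcal M$-structure for the two defect modules to be isomorphic.

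There is also a shorter route that needs no auxiliary space: extend $U$ of \eqref{equation1} by zero on the orthogonal complement of its initial space. That complement is $\rho'(\mathcal M)$-invariant and $U$ intertwines $\rho'$ with $\rho''$ on the initial space (Equation~\eqref{inner}), so the extension is a contractive module map with the required $2\times 2$ shape; it satisfies $\|\widetilde D\|\le\|U\|\le 1$ and the two matrix identities on the graph, and Lemma~\ref{identity}'s proof then applies word for word, with no condition on dimensions at all. The one step I expect to need genuine care is the amplification in the unitary version — arranging the absorbing module $\mathcal D$ so that $E_1\ot_{\phi_1}\mathcal D\cong\mathcal D$ (hence the iterated corners $\widetilde D_n$ have consistent domains) while keeping $U$ a module map — and the zero-extension shows this care is inessential for the conclusion as stated.
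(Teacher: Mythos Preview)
Your first route---amplifying by an infinite-dimensional auxiliary $\mathcal D$ and extending the graph isometry to a unitary---is exactly the paper's approach, and you are in fact more careful than the observation itself: you flag the absorbing condition $E_1\ot\mathcal D\cong\mathcal D$ needed so that the iterated corner $\widetilde D_n$ has consistent domain and codomain, a point the paper's block decomposition $\mathcal{D}_{*, T^{(1)}} \oplus  \big((E_1 \ot\mathcal{D}_{*, T^{(2)}}) \oplus \mathcal{D}\big)$ leaves implicit. Your second route---extending $U$ by zero on the orthogonal complement of its initial space to obtain a contractive (not unitary) module map with the same $2\times2$ shape---is a genuinely simpler alternative the paper does not pursue. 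It works because the proof of Lemma~\ref{identity} uses only the two matrix identities on the graph together with $\|B\|,\|\widetilde D\|\le 1$, never $UU^*=I$; and since here $\widetilde D$ already maps $E_1\ot\mathcal{D}_{*,T^{(2)}}$ to $\mathcal{D}_{*,T^{(2)}}$, the iterates $\widetilde D_n$ are automatically well defined with no absorbing hypothesis. The trade-off is that unitarity \emph{is} used later, in the transfer-function section (via $CC^*=I_{\mathcal{D}_{*,T^{(2)}}}-\widetilde D\widetilde D^*$), so the amplification is still needed downstream; but for the Observation as stated your zero-extension is the cleaner argument.
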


\subsection{Transfer Function}

Suppose that $\mathbb{E}$ is a product system over $\mathbb{N}_0^2,$ and $\pi $ is a representation of $\mathcal{M}$ on $\mathcal{K}.$ Let $\Theta: E_2 \longrightarrow B(\mathcal{K}, \mathcal{F}(E_1) \otimes_{\pi} \mathcal{K})$ be a completely bounded { bi-module map}. Define a bounded linear map $\widetilde{\Theta}: E_2 \otimes \mathcal{K} \longrightarrow \mathcal{F}(E_1) \otimes_{\pi} \mathcal{K}$  by $\widetilde{\Theta}(\xi \otimes h)=\Theta(\xi)h$ for all $\xi\in E_2,h\in \mathcal{K},$ and it satisfies $\widetilde{\Theta}(\phi_2(a) \ot I_{ \mathcal{K}})=\rho(a)\widetilde{\Theta},a\in \mathcal{M},$ where $\phi_2$ is the left action of $\mathcal{M}$ on $E_2.$
We define a corresponding completely bounded bi-module map \cite{DTV1} $M_{\Theta} :E_2  \longrightarrow B(\mathcal{F}(E_1) \otimes_{\pi} \mathcal{K})$ by
\begin{align*}
	M_{\Theta}(\xi) (S_n^{(1)}(\xi_n) h)=\widetilde{S}_n^{(1)}(I_{E_1^{\otimes n}} \otimes \widetilde{\Theta})(t_{2,1}^{(1,n)} \otimes I_{\mathcal{K}})(\xi \otimes \xi_n \otimes h),
\end{align*}
where  $\xi_n \in E_1^{\otimes n}, \xi \in E_2, h\in \mathcal{K}, n \in \mathbb{N}_0$ and $t_{2,1}^{(1,n)}: E_2 \ot E_1^{\ot n}  \rightarrow E_1^{\ot n} \ot E_2 $ is an isomorphism which is a composition of the isomorphisms $\{t_{i,j}\: :\: 1 \leq i, j \leq 2\}.$
Clearly  $M_{\Theta}(\xi)|_{\mathcal{K}}=\Theta(\xi)$ for each $\xi \in E_2,$  $(\rho,{M}_{\Theta})$ is a c.b.c. representation of $E_2$ on $\mathcal{F}(E_1) \otimes_{\pi} \mathcal{K},$ and it satisfies
\begin{align*}
	M_{\Theta}(\xi)\left(\bigoplus_{n \in \mathbb{N}_0}\xi_n \otimes h_n\right)=\sum_{n \in \mathbb{N}_0}\widetilde{S}_n^{(1)}(I_{E_1^{\otimes n}} \otimes \widetilde{\Theta})(t_{2,1}^{(1,n)} \otimes I_{\mathcal{K}})(\xi \otimes \xi_n \otimes h_n),
\end{align*} 
where $\xi \in E_2, \xi_n \in E_1^{\otimes n}, h_n \in \mathcal{K}.$  Let  $(\rho, S)$ be the induced representation of $E_1$ induced by $\pi,$ then $(\rho, S, M_{\Theta})$ is a c.b.c. representation of $\mathbb{E}$ on $\mathcal{F}(E_1) \otimes _{\pi} \mathcal{K} .$
Note that  $(\rho, M_{\Theta})$ is an isometric covariant representation if and only if $\widetilde{\Theta}$  is an isometry.

With the help of unitary module map $U$ as we defined above, we define a bounded linear map $\widetilde{\Theta}: E_{2}\ot \mathcal{D}_{*, T^{(1)}} \to F(E_{1}) \ot \mathcal{D}_{*, T^{(1)}}  $ by \begin{equation*}
	\widetilde{\Theta}:=A^{*}\oplus \bigoplus_{n=0}^{\infty}(I_{E_{1}^{\ot n+1}}\ot  C^{*})(I_{E_1} \ot \widetilde{D}^*_n)B^{*}.
\end{equation*} First we will show that it is well defined, it is enough to prove that $\|\bigoplus_{n=0}^{\infty}(I_{E_{1}^{\ot n+1}} \ot  C^{*})(I_{E_1} \ot \widetilde{D}^*_n)B^{*}(\eta)\|^{2}< \infty$ for $\eta \in E_{2}\ot \mathcal{D}_{*, T^{(1)}}.$ Indeed, using $UU^*=I$ we have
\begin{align*}
	&\|\bigoplus_{n=0}^{\infty}(I_{E_{1}^{\ot n+1}}\ot  C^{*})(I_{E_1} \ot \widetilde{D}^*_n)B^{*}(\eta)\|^{2}\\&=\sum_{n= 0}^{\infty}\langle B(I_{E_1} \ot \widetilde{D}_{n})(I_{E_{1}^{\ot n+1}}\ot  CC^{*})(I_{E_1} \ot \widetilde{D}^*_n)B^{*}\eta,\eta \rangle\\&=\sum_{n= 0}^{\infty}\langle B(I_{E_1} \ot \widetilde{D}_{n})(I_{E_{1}^{\ot n+1}}\ot  (I_{\mathcal{D}_{*, T^{(2)}}}-\widetilde{D}\widetilde{D}^{*}))(I_{E_1} \ot \widetilde{D}^*_n)B^{*}\eta,\eta \rangle\\&=\sum_{n= 0}^{\infty}\langle\big(B(I_{E_1} \ot \widetilde{D}_{n}\widetilde{D}^*_n)B^{*}-B(I_{E_1} \ot \widetilde{D}_{n+1}\widetilde{D}_{n+1}^*)B^{*}\big)\eta,\eta \rangle\\&=\langle BB^{*}-\lim_{n \rightarrow \infty}B(I_{E_1} \ot \widetilde{D}_{n}\widetilde{D}^*_n)B^{*}\eta,\eta \rangle\\&=
	\|B^{*}\eta\|^{2}-\lim_{n \rightarrow \infty}\langle B(I_{E_1} \ot \widetilde{D}_{n}\widetilde{D}^*_n)B^{*}\eta,\eta \rangle<\|B^{*}\eta\|^{2}.
\end{align*}
Here the last inequality follows by the fact that $B$ and $\widetilde{D}$ are contractions. This shows that $\widetilde{\Theta}$ is well defined and contractive.

Since $(\sigma_1, D)$ is a completely contractive covariant representation of $E_1$ on $\mathcal{D}_{*, T^{(2)}},$ from Theorem \ref{contra}, $\mathcal{D}_{*, T^{(2)}}=\mathcal{D}^{(1)}_{*, T^{(2)}} \oplus \mathcal{D}^{(2)}_{*, T^{(2)}}$ has unique orthogonal decomposition such that $\mathcal{D}^{(1)}_{*, T^{(2)}},$ $\mathcal{D}^{(2)}_{*, T^{(2)}}$ reduces $(\sigma_1, D),$  $(\pi_1,V^{(1)}):=(\sigma_1, D)|_{\mathcal{D}^{(1)}_{*, T^{(2)}}}$ is an isometric as well as co-isometric covariant representation and  $(\pi_2,V^{(2)}):=(\sigma_1, D)|_{\mathcal{D}^{(2)}_{*, T^{(2)}}}$ is a completely non-unitary covariant representation. That is,  $\widetilde{D}^{*}\widetilde{D}|_{E_1 \ot\mathcal{D}^{(1)}_{*, T^{(2)}}}=I_{E_1 \ot\mathcal{D}^{(1)}_{*, T^{(2)}}}.$   Also using the fact that $U$ is unitary, we have $B^{*}B+\widetilde{D}^{*}\widetilde{D}=I_{E_1 \ot\mathcal{D}_{*, T^{(2)}}},$ therefore $B^{*}B|_{E_1 \ot\mathcal{D}^{(1)}_{*, T^{(2)}}}=0,$ i.e., $E_1 \ot\mathcal{D}^{(1)}_{*, T^{(2)}}\subseteq ker(B),$ or equivalently, ${Im(B^{*})}\subseteq E_1 \ot\mathcal{D}^{(2)}_{*, T^{(2)}}.$ Hence $(I_{E_1} \ot \widetilde{D}_{n}\widetilde{D}_n^*)B^{*}=(I_{E_1} \ot \widetilde{V}_n^{(2)}\widetilde{V}_n^{(2)*})B^*.$  Finally, we can rewrite $\lim_{n \rightarrow \infty} B(I_{E_1} \ot \widetilde{D}_n\widetilde{D}_n^*)B^{*}=\lim_{n \rightarrow \infty} B(I_{E_1} \ot \widetilde{V}_{n}^{(2)}\widetilde{V}_{n}^{(2)*})B^*.$
\begin{observation}
	$\widetilde{\Theta}$ is an isometry if and only if $(\pi_2, V^{(2)})$ is pure.
\end{observation}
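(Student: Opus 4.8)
The starting point is the norm computation for $\widetilde{\Theta}$ that was essentially carried out just above. The plan is: for $\eta \in E_2 \otimes \mathcal{D}_{*, T^{(1)}}$, use orthogonality of the homogeneous components of $\mathcal{F}(E_1) \otimes \mathcal{D}_{*, T^{(1)}}$ to write $\|\widetilde{\Theta}\eta\|^2 = \|A^*\eta\|^2 + \big\|\bigoplus_{n \geq 0}(I_{E_1^{\otimes n+1}} \otimes C^*)(I_{E_1} \otimes \widetilde{D}_n^*)B^*\eta\big\|^2$, and then to feed in the computation performed just above, namely that the second summand equals $\|B^*\eta\|^2 - \lim_{n} \langle B(I_{E_1} \otimes \widetilde{D}_n\widetilde{D}_n^*)B^*\eta, \eta\rangle$. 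Since $U$ is unitary, $AA^* + BB^* = I$, so $\|A^*\eta\|^2 + \|B^*\eta\|^2 = \|\eta\|^2$; and using $(I_{E_1} \otimes \widetilde{D}_n\widetilde{D}_n^*)B^* = (I_{E_1} \otimes \widetilde{V}_n^{(2)}\widetilde{V}_n^{(2)*})B^*$, recorded above, one gets
\[ \|\widetilde{\Theta}\eta\|^2 = \|\eta\|^2 - \lim_{n \to \infty}\big\langle B(I_{E_1} \otimes \widetilde{V}_n^{(2)}\widetilde{V}_n^{(2)*})B^*\eta, \eta\big\rangle . \]
As $\{\widetilde{V}_n^{(2)}\widetilde{V}_n^{(2)*}\}_n$ decreases to a positive contraction $Q$, the operators $B(I_{E_1} \otimes \widetilde{V}_n^{(2)}\widetilde{V}_n^{(2)*})B^*$ decrease strongly to $B(I_{E_1} \otimes Q)B^*$; hence $\widetilde{\Theta}$ is an isometry if and only if $B(I_{E_1} \otimes Q)B^* = 0$, while $(\pi_2, V^{(2)})$ is pure precisely when $Q = 0$. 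The easy implication then follows at once: if $Q = 0$, then ampliation by $I_{E_1}$ carries the SOT-null sequence $\widetilde{V}_n^{(2)}\widetilde{V}_n^{(2)*}$ to the SOT-null sequence $I_{E_1} \otimes \widetilde{V}_n^{(2)}\widetilde{V}_n^{(2)*}$, so $B(I_{E_1} \otimes Q)B^* = 0$ and $\widetilde{\Theta}$ is an isometry.

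For the converse, assume $B(I_{E_1} \otimes Q)B^* = 0$; I want $Q = 0$. First, $B(I_{E_1} \otimes Q)B^* = \big((I_{E_1} \otimes Q)^{1/2}B^*\big)^*\big((I_{E_1} \otimes Q)^{1/2}B^*\big)$ forces $(I_{E_1} \otimes Q)B^* = 0$. Because $B^*B + \widetilde{D}^*\widetilde{D} = I$ on $E_1 \otimes \mathcal{D}_{*, T^{(2)}}$ and $\widetilde{V}^{(1)}$ is unitary, $\overline{\mathrm{Im}\,B^*}$ coincides with the defect space $\overline{\mathrm{ran}(I - \widetilde{V}^{(2)*}\widetilde{V}^{(2)})} \subseteq E_1 \otimes \mathcal{D}^{(2)}_{*, T^{(2)}}$, so $(I_{E_1} \otimes Q)B^* = 0$ yields $(I_{E_1} \otimes Q)(I - \widetilde{V}^{(2)*}\widetilde{V}^{(2)}) = 0$. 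Letting $n \to \infty$ in $\widetilde{V}_{n+1}^{(2)}\widetilde{V}_{n+1}^{(2)*} = \widetilde{V}^{(2)}(I_{E_1} \otimes \widetilde{V}_n^{(2)}\widetilde{V}_n^{(2)*})\widetilde{V}^{(2)*}$ gives $\widetilde{V}^{(2)}(I_{E_1} \otimes Q)\widetilde{V}^{(2)*} = Q$. Since each $\widetilde{V}_n^{(2)}\widetilde{V}_n^{(2)*}$, and hence $Q$, lies in $\sigma_1(\mathcal{M})'$, the subspace $\mathcal{L} := \overline{\mathrm{ran}\,Q}$ is $\sigma_1(\mathcal{M})$-reducing, with $\overline{\mathrm{ran}(I_{E_1} \otimes Q)} = E_1 \otimes \mathcal{L}$ and $\ker(I_{E_1} \otimes Q) = E_1 \otimes \mathcal{L}^\perp$. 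The two displayed relations then give, in turn: $\widetilde{V}^{(2)*}\widetilde{V}^{(2)} = I$ on $E_1 \otimes \mathcal{L}$ (so $\widetilde{V}^{(2)}$ is isometric there); $\widetilde{V}^{(2)}(E_1 \otimes \mathcal{L}) = \mathcal{L}$ (using that $Qv = 0$ forces $(I_{E_1} \otimes Q)\widetilde{V}^{(2)*}v = 0$, together with closedness of the isometric image); and, from $\|w\| = \|P_{E_1 \otimes \mathcal{L}}\widetilde{V}^{(2)*}w\| \leq \|\widetilde{V}^{(2)*}w\| \leq \|w\|$ for $w \in \mathcal{L}$, also $\widetilde{V}^{(2)*}(\mathcal{L}) \subseteq E_1 \otimes \mathcal{L}$. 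Hence $\mathcal{L}$ reduces $(\pi_2, V^{(2)})$ and $\widetilde{V}^{(2)}|_{E_1 \otimes \mathcal{L}}$ is a surjective isometry, so $(\pi_2, V^{(2)})|_{\mathcal{L}}$ is simultaneously isometric and co-isometric. As $(\pi_2, V^{(2)})$ is completely non-unitary, $\mathcal{L} = \{0\}$, i.e. $Q = 0$, and $(\pi_2, V^{(2)})$ is pure.

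I expect the converse to be the only real work. Within it, the delicate points are the identification of $\overline{\mathrm{Im}\,B^*}$ with the defect space of $\widetilde{V}^{(2)}$, the bookkeeping for ranges and kernels of ampliations $I_{E_1} \otimes (\cdot)$ of operators lying in $\sigma_1(\mathcal{M})'$ (which is exactly what makes $E_1 \otimes \mathcal{L}$ and $E_1 \otimes \mathcal{L}^\perp$ behave as honest complementary subspaces), and the short norm estimate that upgrades $\widetilde{V}^{(2)}|_{E_1 \otimes \mathcal{L}}$ to a unitary — it is precisely this last point that allows complete non-unitarity of $(\pi_2, V^{(2)})$ to finish the argument.
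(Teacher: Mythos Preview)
The paper records this statement as an \emph{observation} and gives no proof; the preceding lines only set up the norm computation and the identity $(I_{E_1}\otimes \widetilde{D}_n\widetilde{D}_n^*)B^*=(I_{E_1}\otimes \widetilde{V}_n^{(2)}\widetilde{V}_n^{(2)*})B^*$. Your argument is therefore not a comparison against an existing proof but a completion of what the paper leaves to the reader, and it is correct.

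Your forward implication is exactly what the paper's setup makes immediate: combining $AA^*+BB^*=I$ with the displayed norm identity yields $\|\widetilde{\Theta}\eta\|^2=\|\eta\|^2-\lim_n\langle B(I_{E_1}\otimes\widetilde{V}_n^{(2)}\widetilde{V}_n^{(2)*})B^*\eta,\eta\rangle$, so purity of $(\pi_2,V^{(2)})$ gives isometry of $\widetilde{\Theta}$. For the converse, which the paper does not spell out at all, your route via the limit $Q=\mbox{SOT-}\lim_n\widetilde{V}_n^{(2)}\widetilde{V}_n^{(2)*}$ is the natural one and the steps check out: $\overline{\operatorname{ran}B^*}=\overline{\operatorname{ran}(B^*B)}=\overline{\operatorname{ran}(I-\widetilde{V}^{(2)*}\widetilde{V}^{(2)})}$ follows from $B^*B=I-\widetilde{D}^*\widetilde{D}$ together with $\widetilde{V}^{(1)*}\widetilde{V}^{(1)}=I$; the identities $\overline{\operatorname{ran}(I_{E_1}\otimes Q)}=E_1\otimes\mathcal{L}$ and $\ker(I_{E_1}\otimes Q)=E_1\otimes\mathcal{L}^\perp$ hold because $Q\in\pi_2(\mathcal M)'$ and $I_{E_1}\otimes Q$ is self-adjoint with $\overline{\operatorname{ran}(I_{E_1}\otimes Q)}\supseteq\{\xi\otimes h:\xi\in E_1,\ h\in\mathcal L\}$; from $\widetilde{V}^{(2)*}\widetilde{V}^{(2)}(I_{E_1}\otimes Q)=I_{E_1}\otimes Q$ one gets isometry of $\widetilde{V}^{(2)}$ on $E_1\otimes\mathcal L$, and the relation $\widetilde{V}^{(2)}(I_{E_1}\otimes Q)\widetilde{V}^{(2)*}=Q$ then forces $\widetilde{V}^{(2)}(E_1\otimes\mathcal L)=\mathcal L$ and $\widetilde{V}^{(2)*}(\mathcal L)\subseteq E_1\otimes\mathcal L$ by your norm estimate. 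Complete non-unitarity of $(\pi_2,V^{(2)})$ then gives $\mathcal L=\{0\}$, hence $Q=0$. This is precisely the argument one would expect the authors had in mind.
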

Define ${\Theta}: E_{2} \to B(\mathcal{D}_{*, T^{(1)}},F(E_{1}) \ot \mathcal{D}_{*, T^{(1)}})$ by ${\Theta}(\eta)h=\widetilde{\Theta}(\eta\ot h),$ where $\eta \in E_{2}$ and $ h \in\mathcal{D}_{*, T^{(1)}}.$ We need to show that $\Theta$ is a bi-module map. That is, $\Theta(a.\eta. b)=\rho(a)\Theta(\eta)\rho(b).$ Let $ h \in \mathcal{D}_{*, T^{(1)}}$ we have \begin{align*}&
	\widetilde{\Theta}(a.\eta. b\ot h)= A^{*}(a.\eta. b\ot h)\oplus \bigoplus_{n=0}^{\infty}(I_{E_{1}^{\ot n+1}}\ot  C^{*})(I_{E_1} \ot \widetilde{D}^*_{n})B^{*}(a.\eta. b\ot h)\\&=A^{*}(a.\eta \ot b .h)\oplus \bigoplus_{n=0}^{\infty}(I_{E_{1}^{\ot n+1}}\ot  C^{*})(I_{E_1} \ot \widetilde{D}^*_{n})B^{*}(a.\eta \ot b .h)\\&=\big(A^{*}(\phi_2(a) \ot I_{ \mathcal{D}_{*, T^{(1)}}})\oplus \\& \quad \quad \bigoplus_{n=0}^{\infty}(I_{E_1} \ot (I_{E_{1}^{\ot n}}\ot  C^{*})\widetilde{D}^*_{n})B^{*}(\phi_{2}(a)  \ot I_{ \mathcal{D}_{*, T^{(1)}}})\big)(\eta \ot b. h)\\&=\sigma(a)|_{\mathcal{D}_{*, T^{(1)}}}A^{*}(\eta \ot b. h)\oplus \\& \quad \quad \bigoplus_{n=0}^{\infty}(I_{E_{1}^{\ot n+1}}\ot  C^{*})(I_{E_1} \ot \widetilde{D}^*_{n})(\phi_{1}(a) \ot I_{ \mathcal{D}_{*, T^{(2)}}})B^{*}(\eta \ot b. h)\\&=\sigma(a)|_{\mathcal{D}_{*, T^{(1)}}}A^{*}(\eta \ot b. h)\oplus \\& \quad \quad \bigoplus_{n=0}^{\infty}(I_{E_{1}^{\ot n+1}}\ot  C^{*})(\phi_1^{n+1}(a) \ot I_{ \mathcal{D}_{*, T^{(2)}}})(I_{E_1} \ot \widetilde{D}_n^{*})B^{*}(\eta \ot b. h)\\&=\sigma(a)|_{\mathcal{D}_{*, T^{(1)}}}A^{*}(\eta \ot b. h)\oplus \\& \quad \quad \bigoplus_{n=0}^{\infty}(\phi_1^{ n+1}(a) \ot I_{ \mathcal{D}_{*, T^{(1)}}})(I_{E_{1}^{\ot n+1}}\ot  C^{*})(I_{E_1} \ot \widetilde{D}_n^{*})B^{*}(\eta \ot b. h).
\end{align*} Finally, we obtain \begin{align*}
	&\widetilde{\Theta}(a.\eta. b\ot h)=\big(\sigma(a)|_{\mathcal{D}_{*, T^{(1)}}}A^{*}\oplus \\& \quad \quad  \bigoplus_{n=0}^{\infty}(\phi_1^{ n+1}(a) \ot I_{ \mathcal{D}_{*, T^{(1)}}})(I_{E_1} \ot (I_{E_{1}^{\ot n}}\ot  C^{*})\widetilde{D}_n^{*})B^{*}\big)( \eta .b\ot h)\\&=\big(\big(\sigma(a)|_{\mathcal{D}_{*, T^{(1)}}}\oplus \bigoplus_{n=0}^{\infty}(\phi_1^{n+1}(a) \ot I_{ \mathcal{D}_{*, T^{(1)}}})\big)(A^{*}\oplus \\& \quad \quad \bigoplus_{n=0}^{\infty}(I_{E_1} \ot (I_{E_{1}^{\ot n}}\ot  C^{*})\widetilde{D}_n^{*})B^{*}))\big( \eta .b\ot h)\\&=\rho(a)\Theta(\eta)\rho(b)h.
\end{align*} Thus $\Theta$ is a bi-module map. From \cite[Lemma 2.2]{DTV1}, $(\rho,S^{(1)},M_{\Theta})$ is a completely contractive covariant representation of $\mathbb{E}$ over $\mathbb{N}_0^2$ on 	$\mathcal{F}(E_1) \otimes \mathcal{D}_{*, T^{(1)}}.$ The contractive linear map $\widetilde{\Theta}$ is called the {\it transfer function} of unitary module map $U.$

\subsection{Ando Dilations}

We are ready to prove our main result, a generalization of \cite[Theorem 3.1]{BJ17}.   
\begin{theorem}\label{DSAR2}
	Assume $\be$ to be a product system over $\mathbb N^2_0$. Let $(\sigma, T^{(1)}, T^{(2)})$ be a completely contractive covariant representation of $\be$ on $\mathcal H$ such that $(\sigma,T^{(1)})$ is pure and  $ \dim (\mathcal{D}_{*, T^{(1)}} \oplus  (E_1 \ot\mathcal{D}_{*, T^{(2)}}))$ =  $ \dim ((E_2 \ot \mathcal{D}_{*, T^{(1)}}) \oplus  \mathcal{D}_{*, T^{(2)}} )<\infty.$ Then there exist an isometry $\Pi_{1}: \mathcal{H} \longrightarrow \mathcal{F}(E_{1})\otimes \mathcal{D}_{*, T^{(1)}}$ and a completely contractive bi-module map ${\Theta}: E_{2} \to B(\mathcal{D}_{*, T^{(1)}},F(E_{1}) \ot \mathcal{D}_{*, T^{(1)}})$ such that 	
	\begin{align*}
		\Pi_{1} \sigma(a)=\rho(a)\Pi_1, (I_{E_1} \ot \Pi_{1})\widetilde{T}^{(1)*}=\widetilde{S}^{(1)*}\Pi_{1} \:\: \mbox{and} \:\: {(I_{E_{2}} \ot\Pi_{1}) \wT^{(2)}}^{*} = \widetilde{M}^{*}_{\Theta}\Pi_{1},
	\end{align*} for all $a\in \mathcal{M}.$
\end{theorem}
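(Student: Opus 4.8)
The plan is to take $\Pi_1$ to be the Poisson kernel of the completely contractive covariant representation $(\sigma,T^{(1)})$ of the single correspondence $E_1$ on $\mathcal H$, namely
\[
\Pi_1 h:=\sum_{n\ge 0}\bigl(I_{E_1^{\ot n}}\ot \Delta_*(T^{(1)})\bigr)\wT^{(1)*}_n h,\qquad h\in\mathcal H,
\]
and to take $\Theta$ to be the transfer function of the unitary module map $U=\begin{bmatrix}A&B\\C&\widetilde{D}\end{bmatrix}$ constructed in the preceding subsection. Since $(\sigma,T^{(1)})$ is pure, Theorem~\ref{dilation} applied to $E_1$ gives immediately that $\Pi_1$ is an isometry and that $\Pi_1\sigma(a)=\rho(a)\Pi_1$ and $(I_{E_1}\ot\Pi_1)\wT^{(1)*}=\widetilde{S}^{(1)*}\Pi_1$ for all $a\in\mathcal M$, so the first two intertwining relations hold. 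Moreover $\Theta\colon E_2\to B(\mathcal D_{*,T^{(1)}},\mathcal F(E_1)\ot\mathcal D_{*,T^{(1)}})$ was already shown to be a completely contractive bi-module map and $(\rho,S^{(1)},M_\Theta)$ to be a completely contractive covariant representation of $\be$. Thus the whole theorem reduces to verifying the third relation $(I_{E_2}\ot\Pi_1)\wT^{(2)*}=\widetilde{M}^*_\Theta\Pi_1$.

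To prove this, I would fix $h\in\mathcal H$ and expand the left-hand side using the series for $\Pi_1$:
\[
(I_{E_2}\ot\Pi_1)\wT^{(2)*}h=\sum_{n\ge 0}\bigl(I_{E_2\ot E_1^{\ot n}}\ot\Delta_*(T^{(1)})\bigr)\bigl(I_{E_2}\ot\wT^{(1)*}_n\bigr)\wT^{(2)*}h.
\]
Iterating the commutation relation $\wT^{(2)}(I_{E_2}\ot\wT^{(1)})=\wT^{(1)}(I_{E_1}\ot\wT^{(2)})(t_{2,1}\ot I_{\mathcal H})$ gives $\wT^{(2)}(I_{E_2}\ot\wT^{(1)}_n)=\wT^{(1)}_n(I_{E_1^{\ot n}}\ot\wT^{(2)})(t_{2,1}^{(1,n)}\ot I_{\mathcal H})$, and passing to adjoints rewrites the $n$-th summand so that $(I_{E_2}\ot\Delta_*(T^{(1)}))\wT^{(2)*}$ occupies the innermost tensor slot, applied to $\wT^{(1)*}_n h$. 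At that point I would substitute Lemma~\ref{identity}, replacing $(I_{E_2}\ot\Delta_*(T^{(1)}))\wT^{(2)*}$ by $A\Delta_*(T^{(1)})+\sum_{m\ge 0}B(I_{E_1}\ot\widetilde{D}_m)(I_{E_1^{\ot m+1}}\ot C\Delta_*(T^{(1)}))\wT^{(1)*}_{m+1}$, and use $\wT^{(1)*}_{n+m+1}=(I_{E_1^{\ot n}}\ot\wT^{(1)*}_{m+1})\wT^{(1)*}_n$ to turn the left-hand side into a convergent double series whose summands are $(t_{2,1}^{(1,n)*}\ot I)(I_{E_1^{\ot n}}\ot A)(\Pi_1 h)_n$ and $(t_{2,1}^{(1,n)*}\ot I)(I_{E_1^{\ot n}}\ot B(I_{E_1}\ot\widetilde{D}_m)(I_{E_1^{\ot m+1}}\ot C))(\Pi_1 h)_{n+m+1}$, where $(\Pi_1 h)_p$ denotes the degree-$p$ homogeneous component of $\Pi_1 h$.

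For the right-hand side I would compute $\widetilde{M}^*_\Theta$ componentwise on $\mathcal F(E_1)\ot\mathcal D_{*,T^{(1)}}$ directly from the defining identity $M_\Theta(\xi)(\xi_p\ot\zeta)=\widetilde{S}^{(1)}_p(I_{E_1^{\ot p}}\ot\widetilde{\Theta})(t_{2,1}^{(1,p)}\ot I)(\xi\ot\xi_p\ot\zeta)$ together with the block form $\widetilde{\Theta}=A^*\oplus\bigoplus_{m\ge 0}(I_{E_1^{\ot m+1}}\ot C^*)(I_{E_1}\ot\widetilde{D}^*_m)B^*$. Taking adjoints, the $A^*$-block of $\widetilde{\Theta}$ produces exactly the $(I_{E_1^{\ot n}}\ot A)(\Pi_1 h)_n$ family and the $m$-th block produces exactly the $(I_{E_1^{\ot n}}\ot B(I_{E_1}\ot\widetilde{D}_m)(I_{E_1^{\ot m+1}}\ot C))(\Pi_1 h)_{n+m+1}$ family, each carried by the same flip $t_{2,1}^{(1,n)*}$; matching the two double series term by term --- which is legitimate because $A,B,C,\widetilde{D}$ are module maps and each $t_{2,1}^{(1,p)}$ is the relevant composite of the $t_{i,j}$ --- yields $\widetilde{M}^*_\Theta\Pi_1 h=(I_{E_2}\ot\Pi_1)\wT^{(2)*}h$.

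The genuinely routine steps --- the isometry property of $\Pi_1$, the well-definedness and complete contractivity of $\Theta$, and the convergence of the series, the last handled by purity of $(\sigma,T^{(1)})$ exactly as in the proof of Lemma~\ref{identity} --- are already in place. The real obstacle is the bookkeeping in the matching step: one must track the product-system isomorphisms $t_{2,1}^{(1,n)}$ through the adjoints and confirm that the double series coming from the left-hand side (via the commutation relation and Lemma~\ref{identity}) agrees index-by-index with the one coming from $\widetilde{M}^*_\Theta$ (via the graded block decomposition of $\widetilde{\Theta}$). Finally, the dimension-matching hypothesis --- or the Observation that removes it --- is used only to manufacture the unitary $U$ and hence $\Theta$; once $\Theta$ is fixed, nothing further in the argument depends on it.
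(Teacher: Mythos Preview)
Your proposal is correct and follows essentially the same approach as the paper: take $\Pi_1$ to be the Poisson kernel of $(\sigma,T^{(1)})$, take $\Theta$ to be the transfer function of the unitary $U$, invoke Theorem~\ref{dilation} for the first two intertwining relations, and establish the third by combining the commutation relation for $\wT^{(1)},\wT^{(2)}$ with Lemma~\ref{identity} against the block expansion of $\widetilde{\Theta}$. The only cosmetic difference is that the paper verifies the third identity by pairing both sides against test vectors $\eta_n\in E_2\ot E_1^{\ot n}\ot\mathcal D_{*,T^{(1)}}$ rather than matching the series directly, but the computation is the same.
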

\begin{proof}
	Let $(\sigma, T^{(1)})$ be a pure completely contractive covariant representation of $E_1$ on $\mathcal H,$ then from Theorem \ref{dilation}, there exists an isometry $\Pi_{1}: \mathcal{H} \longrightarrow \mathcal{F}(E_{1})\otimes \mathcal{D}_{*, T^{(1)}}$ defined by $$\Pi_{1} h=\sum_{k\geq0}(I_{E_{1}^{\otimes k}}\otimes\Delta_*(T^{(1)}))\wT^{(1)^*}_{k}h,\:\:\: h \in \mathcal{H}$$ such that
	\begin{align*}
		\Pi_{1} \sigma(a)=\rho(a)\Pi_1 \:\: \mbox{and} \:\: (I_{E_1} \ot \Pi_{1})\widetilde{T}^{(1)*}=\widetilde{S}^{(1)*}\Pi_{1},
	\end{align*}
	where $(\rho,S^{(1)})$ is an induced representation of $E_1$ induced by $\sigma|_{\mathcal{D}_{*,T^{(1)}}}.$ We only need to show that $ {(I_{E_{2}} \ot\Pi_{1}) \wT^{(2)}}^{*} = \widetilde{M}^{*}_{\Theta}\Pi_{1}.$ 
	Define a completely contractive bi-module map ${\Theta}: E_{2} \to B(\mathcal{D}_{*, T^{(1)}},F(E_{1}) \ot \mathcal{D}_{*, T^{(1)}})$ by $\Theta(\xi)h=\widetilde{\Theta}(\xi \ot h)$ where $\xi\in E_2,h\in \mathcal{D}_{*, T^{(1)}}$ and $
	\widetilde{\Theta}=A^{*}\oplus \bigoplus_{m=0}^{\infty}(I_{E_{1}^{\ot m+1}}\ot  C^{*})(I_{E_1} \ot \widetilde{D}_m^{*})B^{*}.$ Define a corresponding contractive linear map $\widetilde{M}_{\Theta} :E_2 \ot F(E_{1}) \ot \mathcal{D}_{*, T^{(1)}}  \longrightarrow F(E_{1}) \ot \mathcal{D}_{*, T^{(1)}} $ by $$\widetilde{M}_{\Theta}=\widetilde{S}^{(1)}_n(I_{E_1^{\otimes n}} \otimes \widetilde{\Theta})(t_{2,1}^{(1,n)} \otimes I_{\mathcal{D}_{*, T^{(1)}}}),$$ and it satisfies $\widetilde{M}_{\Theta}(\phi_2(a) \ot I_{F(E_{1}) \ot \mathcal{D}_{*, T^{(1)}} })=\rho(a)\widetilde{M}_{\Theta}$ for $a \in \mathcal{M}.$ For $ \eta_n \in {E_2}\ot E_1^{\otimes n}\ot \mathcal{D}_{*, T^{(1)}},h\in \mathcal{ H}$ and from Lemma \ref{identity} we have
	
	\begin{align*} &
		\langle\widetilde{M}^{*}_{\Theta}\Pi_{1}h ,\eta_n\rangle \\&=\langle\sum_{k= 0}^\infty (I_{E_1^{\otimes k}}\ot \Delta_*(T^{(1)}))\wT^{(1)^*}_{k}h ,\widetilde{S}_n^{(1)}(I_{E_1^{\otimes n}} \otimes \widetilde{\Theta})(t_{2,1}^{(1,n)} \otimes I_{\mathcal{D}_{*, T^{(1)}} })\eta_n\rangle \\&=\langle(I_{E_1^{\otimes n}}\ot \Delta_*(T^{(1)}))\wT^{(1)^*}_{n}h,(I_{E_1^{\otimes n}}\ot A^{*})(t_{2,1}^{(1,n)} \otimes I_{\mathcal{D}_{*, T^{(1)}} })\eta_n\rangle\\&  \quad \quad +\sum_{m=0}^{\infty}\langle(I_{E_1^{\otimes n+m+1}}\ot \Delta_*(T^{(1)}))\wT^{(1)^*}_{n+m+1}h,\\& \quad \quad\quad (I_{E_1^{\otimes n}}\ot (I_{E_1} \ot (I_{E_{1}^{\ot m}}\ot  C^{*})\widetilde{D}_m^{*})B^{*})(t_{2,1}^{(1,n)} \otimes I_{\mathcal{D}_{*, T^{(1)}} })\eta_n\rangle \\&= \langle(I_{E_1^{\otimes n}}\ot A\Delta_*(T^{(1)}))\wT^{(1)^*}_{n}h,(t_{2,1}^{(1,n)} \otimes I_{\mathcal{D}_{*, T^{(1)}} })\eta_n\rangle\\&  \quad \quad +\sum_{m=0}^{\infty}\langle(I_{E_1^{\otimes n}}\ot( B(I_{E_1}\ot \widetilde{D}_{m})))(I_{E_1^{\otimes n+m+1}}\ot C\Delta_*(T^{(1)}))\wT^{(1)^*}_{n+m+1}h,\\& \quad \quad\quad (t_{2,1}^{(1,n)} \otimes I_{\mathcal{D}_{*, T^{(1)}} })\eta_n\rangle\\&= \langle\big(I_{E_1^{\otimes n}}\ot A\Delta_*(T^{(1)})+\sum_{m=0}^{\infty}(I_{E_1^{\otimes n}}\ot( B(I_{E_1}\ot \widetilde{D}_{m})))\\& \quad \quad (I_{E_1^{\otimes n+m+1}}\ot C\Delta_*(T^{(1)}))\wT^{(1)^*}_{m+1})\big)\wT^{(1)^*}_{n}h,(t_{2,1}^{(1,n)} \otimes I_{\mathcal{D}_{*, T^{(1)}} })\eta_n\rangle\\&= \langle (I_{E_1^{\otimes n}}\ot \big(A\Delta_*(T^{(1)})+\sum_{m=0}^{\infty} B(I_{E_1}\ot \widetilde{D}_{m}) (I_{E_1^{\otimes m +1}}\ot C\Delta_*(T^{(1)}))\wT^{(1)^*}_{m+1})\big)\wT^{(1)^*}_{n}h,\\&\quad \quad (t_{2,1}^{(1,n)} \otimes I_{\mathcal{D}_{*, T^{(1)}} })\eta_n\rangle\\&= \langle (I_{E_1^{\otimes n}}\ot	(I_{E_2} \ot \Delta_*(T^{(1)}))\wT^{(2)^*}\big)\wT^{(1)^*}_{n}h,(t_{2,1}^{(1,n)} \otimes I_{\mathcal{D}_{*, T^{(1)}} }) \eta_n\rangle.
	\end{align*} 
	On the other hand, since  
	
	\begin{align*}&
		\langle{(I_{E_{2}} \ot\Pi_{1}) \wT^{(2)}}^{*}h, \eta_n\rangle 
		=\langle{(I_{E_{2}} \ot\sum_{k\geq0}(I_{E_{1}^{\otimes k}}\otimes \Delta_*(T^{(1)}))\wT^{(1)^*}_{k}) \wT^{(2)}}^{*}h , \eta_n\rangle \\&=
		\langle{(I_{E_{2}} \ot(I_{E_{1}^{\otimes n}}\otimes\Delta_*(T^{(1)}))\wT^{(1)^*}_{n}) \wT^{(2)}}^{*}h , \eta_n\rangle \\&=
		\langle h ,{\wT^{(2)}(I_{E_{2}} \ot\wT^{(1)}_{n}(I_{E_{1}^{\otimes n}}\otimes\Delta_*(T^{(1)}))) } \eta_n\rangle  \\&=
		\langle h ,{\wT^{(1)}_{n}(I_{E_{1}^{\otimes n}} \ot\wT^{(2)}(I_{E_{2}}\otimes\Delta_*(T^{(1)}))) }(t_{2,1}^{(1,n)} \ot I_{\mathcal{D}_{*, T^{(1)}} })\eta_n\rangle \\&=
		\langle (I_{E_1^{\otimes n}}\ot(I_{E_2} \ot \Delta_*(T^{(1)}))\wT^{(2)^*})\wT^{(1)^*}_{n}h ,(t_{2,1}^{(1,n)} \ot I_{\mathcal{D}_{*, T^{(1)}} })\eta_n\rangle  =
		\langle\widetilde{M}^{*}_{\Theta}\Pi_{1}h ,\eta_n\rangle,
	\end{align*} we get $ {(I_{E_{2}} \ot\Pi_{1}) \wT^{(2)}}^{*} = \widetilde{M}^{*}_{\Theta}\Pi_{1}.$ This complets the proof.
\end{proof} 

\begin{observation}
	\begin{enumerate}
		\item In the setting of Theorem \ref{DSAR2}, if $\mathcal{Q}:=\Pi_1\mathcal{H},$ then $$\rho(a)\mathcal{Q}\subseteq \mathcal{Q}, ~\widetilde{S}^{
			(1)^*}(\mathcal{Q})\subseteq E_1\ot \mathcal{Q}\quad \mbox{and}\quad \widetilde{M}_{\Theta}^*(\mathcal{Q})\subseteq E_2\ot \mathcal{Q} \quad \mbox{for}\quad a\in \mathcal{M}.$$
		\item The representations $(\pi',X,Y)$ and $(\sigma, T^{(1)}, T^{(2)})$ of $\mathbb{E}$ acting on the Hilbert spaces $\mathcal{Q}$ and $\mathcal{H},$ respectively, are isomorphic, where $\pi'=\rho|_{\mathcal{Q}},$
		$\widetilde{X}= P_{\mathcal{Q}}\widetilde{S}^{(1)}|_{E_1\ot \mathcal{Q}}$ and $\widetilde{Y}= P_{\mathcal{Q}}\widetilde{M}_{\Theta}|_{E_2\ot \mathcal{Q}}.$
		\item Theorem \ref{DSAR2} remains valid if we drop the assumption that $ \dim (\mathcal{D}_{*, T^{(1)}} \oplus  (E_1 \ot\mathcal{D}_{*, T^{(2)}})) < \infty$ and $ \dim ((E_2 \ot \mathcal{D}_{*, T^{(1)}}) \oplus  \mathcal{D}_{*, T^{(2)}} )<\infty.$
	\end{enumerate}
\end{observation}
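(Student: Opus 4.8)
The plan is to assemble the three required intertwining identities from the machinery already in place. First I would invoke Theorem~\ref{dilation} applied to the pure completely contractive covariant representation $(\sigma, T^{(1)})$ of the single $C^*$-correspondence $E_1$ on $\mathcal{H}$: since purity makes the Poisson kernel an isometry, I obtain an isometry $\Pi_1 : \mathcal{H} \to \mathcal{F}(E_1)\otimes_\sigma \mathcal{D}_{*, T^{(1)}}$, given explicitly by $\Pi_1 h = \sum_{k\ge 0}(I_{E_1^{\otimes k}}\otimes \Delta_*(T^{(1)}))\wT^{(1)^*}_k h$, which already satisfies $\Pi_1\sigma(a) = \rho(a)\Pi_1$ and $(I_{E_1}\otimes \Pi_1)\wT^{(1)^*} = \wT S^{(1)^*}\Pi_1$. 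So the only genuine work is the third identity $(I_{E_2}\otimes \Pi_1)\wT^{(2)^*} = \widetilde{M}_\Theta^*\Pi_1$, where $\Theta$ is the transfer function built from the unitary module map $U$ of the previous subsection, namely $\widetilde{\Theta} = A^* \oplus \bigoplus_{m\ge 0}(I_{E_1^{\otimes m+1}}\otimes C^*)(I_{E_1}\otimes \widetilde{D}_m^*)B^*$, and $\widetilde{M}_\Theta$ is the induced multiplication operator on the Fock space.

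The strategy for the third identity is to compute $\langle \widetilde{M}_\Theta^*\Pi_1 h, \eta_n\rangle$ and $\langle (I_{E_2}\otimes \Pi_1)\wT^{(2)^*}h, \eta_n\rangle$ separately on a spanning set of elementary tensors $\eta_n \in E_2\otimes E_1^{\otimes n}\otimes \mathcal{D}_{*, T^{(1)}}$ and show they agree. For the left-hand side, I would expand $\Pi_1 h$ as the Fock series, use the definition of $\widetilde{M}_\Theta$ together with the block form of $\widetilde{\Theta}$, and exploit that $\wT S^{(1)}_n$ acts within a fixed graded piece so that only the $n$-th (and higher) summands of $\Pi_1 h$ survive, pairing against the $A^*$-block and the $B^*$-blocks respectively; after moving $A, B, C, \widetilde{D}_m$ across the adjoints this collects into a sum $A\Delta_*(T^{(1)}) + \sum_{m\ge 0} B(I_{E_1}\otimes \widetilde{D}_m)(I_{E_1^{\otimes m+1}}\otimes C\Delta_*(T^{(1)}))\wT^{(1)^*}_{m+1}$ applied inside $I_{E_1^{\otimes n}}\otimes(-)$ to $\wT^{(1)^*}_n h$, and then Lemma~\ref{identity} identifies that bracketed expression with $(I_{E_2}\otimes \Delta_*(T^{(1)}))\wT^{(2)^*}$. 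For the right-hand side, I would push $\wT^{(2)^*}$ through the Fock expansion of $I_{E_2}\otimes \Pi_1$, take adjoints to land on $\wT^{(2)}(I_{E_2}\otimes \wT^{(1)}_n(\cdots))$, and use the commutation relation of the product system (in the form that lets $\wT^{(2)}$ slide past $\wT^{(1)}_n$ at the cost of the permutation isomorphism $t_{2,1}^{(1,n)}$) to rewrite it as $\wT^{(1)}_n(I_{E_1^{\otimes n}}\otimes \wT^{(2)}(\cdots))(t_{2,1}^{(1,n)}\otimes I)$, which upon taking adjoints back gives exactly $(I_{E_1^{\otimes n}}\otimes (I_{E_2}\otimes\Delta_*(T^{(1)}))\wT^{(2)^*})\wT^{(1)^*}_n h$ paired against $(t_{2,1}^{(1,n)}\otimes I)\eta_n$ — matching the left-hand side computation.

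I expect the main obstacle to be bookkeeping rather than conceptual: correctly tracking the tensor-leg identifications and the permutation isomorphisms $t_{2,1}^{(1,n)}$ when commuting $\wT^{(2)}$ past powers of $\wT^{(1)}$, and making sure the grading argument (which summand of $\Pi_1 h$ pairs with which block of $\widetilde{\Theta}$) is done cleanly so that the infinite series telescopes into the Lemma~\ref{identity} expression. Convergence of the series is not an issue because Lemma~\ref{identity} already guarantees strong-operator convergence of the relevant sum, and $\widetilde{\Theta}$ was shown to be a well-defined contraction in the transfer-function subsection, so $\widetilde{M}_\Theta$ is a bona fide contractive operator and all pairings are legitimate. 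Finally, for part (3) of the Observation — dropping the finiteness of the defect dimensions — I would simply replace $U$ by its ampliation $U \oplus I_{\mathcal{D}}$ on an auxiliary infinite-dimensional Hilbert space $\mathcal{D}$ as described in the Observation following Lemma~\ref{identity}, which makes $U$ extend to a unitary module map in general; the block entries $A, B, C, \widetilde{D}$ are defined exactly as before, Lemma~\ref{identity} and the transfer-function construction go through verbatim, and the proof above applies without change.
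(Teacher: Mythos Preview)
Your proposal misidentifies what is to be proved. The Observation opens with ``In the setting of Theorem~\ref{DSAR2}'', so the three intertwining identities $\Pi_1\sigma(a)=\rho(a)\Pi_1$, $(I_{E_1}\otimes\Pi_1)\wT^{(1)^*}=\widetilde{S}^{(1)^*}\Pi_1$, and $(I_{E_2}\otimes\Pi_1)\wT^{(2)^*}=\widetilde{M}_\Theta^*\Pi_1$ are \emph{hypotheses}, not conclusions. What you have written is a correct re-derivation of Theorem~\ref{DSAR2} itself --- and your argument tracks the paper's proof of that theorem essentially line for line --- followed by a correct treatment of part~(3) via the ampliation trick. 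But parts~(1) and~(2) of the Observation are never addressed: you do not mention $\mathcal{Q}$, the invariance claims, or the compressed representation $(\pi',X,Y)$.

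What is actually required is short. For part~(1), the intertwining relations give directly $\rho(a)\mathcal{Q}=\rho(a)\Pi_1\mathcal{H}=\Pi_1\sigma(a)\mathcal{H}\subseteq\mathcal{Q}$, $\widetilde{S}^{(1)^*}\mathcal{Q}=(I_{E_1}\otimes\Pi_1)\wT^{(1)^*}\mathcal{H}\subseteq E_1\otimes\mathcal{Q}$, and similarly $\widetilde{M}_\Theta^*\mathcal{Q}\subseteq E_2\otimes\mathcal{Q}$. For part~(2), since $\Pi_1$ is an isometry with range $\mathcal{Q}$ one has $\Pi_1\Pi_1^*=P_{\mathcal{Q}}$, and compressing the intertwining relations yields $\Pi_1\sigma(a)=\pi'(a)\Pi_1$, $\Pi_1\wT^{(1)}=\widetilde{X}(I_{E_1}\otimes\Pi_1)$, $\Pi_1\wT^{(2)}=\widetilde{Y}(I_{E_2}\otimes\Pi_1)$, exhibiting $\Pi_1:\mathcal{H}\to\mathcal{Q}$ as the required unitary equivalence. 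The paper records the Observation without proof precisely because these deductions are one-liners once Theorem~\ref{DSAR2} is available; your proposal reproves the theorem you were entitled to quote and omits the deductions that constitute the actual statement.
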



\subsection*{Conflict of Interest}
We state that there is no conflict of interest and we have no personal relationships that could have appeared to influence the work reported in this paper.

\subsection*{Data Availability}
Data sharing is not applicable to this article as no datasets were generated or analyzed during the current study.

\subsection*{Acknowledgement}
The authors thank the reviewer for carefully reading the manuscript and the editor for suggesting changes in the manuscript. The authors want to thank Harsh Trivedi and Shankar Veerabathiran for some fruitful discussions.


\begin{thebibliography}{1}
	
	
	\bibitem{AT63}
	And\^{o}, T.: {\it On a pair of commutative contractions}.
	Acta Sci. Math.(Szeged) \textbf{24}, no. 1-2 (1963): 88-90.
	
	
	
	
	
	
	\bibitem{BKHJ19}
	Barik, S., Das, B.K., Haria, K.J.,
	Sarkar, J.: \emph{Isometric dilations and von {N}eumann inequality for a class of tuples in the polydisc},
	Transactions of the American Mathematical Society \textbf{372} (2019), no.~2, {1429-1450}.
	
	\bibitem{C77}
	Cuntz, J.: \emph{Simple $C^*$-algebras generated by isometries}, Comm. Math. Phys.
	\textbf{57} (1977), no.~2, 173-185.
	
	\bibitem{CV93}
	Curto, R.E., Vasilescu, F.H.: \emph{Standard operator models in the polydisc}, Indiana Univ. Math. J. \textbf{42}
	(1993), 791-810.
	
	\bibitem{BJ17}
	Das, B.K., Sarkar, J.: \emph{Ando dilations, von Neumann inequality, and distinguished
		varieties},
	J. Funct. Anal. \textbf{272} (2017), no.~5, 2114-2131.
	
	\bibitem{F02}
	Fowler, N.J.: \emph{Discrete product systems of Hilbert bimodules}, Pacific J. Math.
	\textbf{204} (2002), no.~2, 335-375.
	
	\bibitem{F84}
	Frazho, A.E.: \emph{Complements to models for noncommuting operators}, J. Funct. Anal. \textbf{59} (1984), 445-461.
	
	
	\bibitem{GS97}
	Gasper, D., Suciu, N.: \emph{On the intertwinings of regular dilations}, Ann. Polon. Math. \textbf{66} (1997),
	105-121.
	
	
	
	
	
	
	
	\bibitem{La95}
	Lance, E.C.: \emph{Hilbert {$C^*$}-modules}, London Mathematical Society
	Lecture Note Series, vol. 210, Cambridge University Press, Cambridge, 1995, A
	toolkit for operator algebraists.
	
	
	
	\bibitem{MR1648483}
	Muhly, P.S., Solel, B.: \emph{Tensor Algebras, {I}nduced Representations,
		and the {W}old Decomposition}, Canad. J. Math \textbf{51} (1999),  no. 4, 850-880.
	
	\bibitem{MS98}
	Muhly, P.S., Solel, B.: \emph{Tensor algebras over
		{$C^*$}-correspondences: representations, dilations, and {$C^*$}-envelopes},
	J. Funct. Anal. \textbf{158} (1998), no. 2, 389-457.
	
	
	
	\bibitem{MV93}
	Muller, V., Vasilescu, F.H.: \emph{Standard models for some commuting multioperators}, Proc. Amer.Math.
	Soc. \textbf{117} (1993), 979-989.
	
	\bibitem{NFS}
	Nagy, B.S.,  Foias, C.,  Bercovici, H.,  Kérchy, L.: {\it Harmonic analysis of operators on Hilbert space}.
	Second edition.
	Revised and enlarged edition.
	Universitext. Springer, New York,  2010. xiv+474 pp.
	
	
	\bibitem{SP70}
	Parrott, S.: \emph{Unitary dilations for commuting contractions}, Pacific J. Math. \textbf{34} (1970), 481-490.
	
	
	
	\bibitem{MR0355613}
	Paschke, W.L.: \emph{Inner product modules over {$B^{\ast} $}-algebras},
	Trans. Amer. Math. Soc. \textbf{182} (1973), 443-468.
	
	\bibitem{P97}
	Pimsner, M.V: \emph{ A class of {$C^*$}-algebras generalizing both
		{C}untz-{K}rieger algebras and crossed products by {${\bf
				Z}$}},
	Free probability theory (Waterloo, ON, 1995), 189-212, Fields Inst. Commun., 12, Amer. Math. Soc., Providence, RI, (1997).
	
	
	\bibitem{POP89}
	Popescu, G.: \emph{Isometric dilations for infinite sequences of noncommuting
		operators}, Trans. Amer. Math. Soc. {319}(2) (1989), 523-536.
	
	
	
	
	\bibitem{RTV}
	Rohilla, A., Trivedi, H., Veerabathiran, S.: \emph{Beurling quotient subspaces for covariant representations of product systems}. Annals of Functional Analysis \textbf{14}, no. 4 (2023): 79.
	
	
	\bibitem{DTV1}
	Saini, D., Trivedi, H., Veerabathiran, S.: \emph{Berger-Coburn-Lebow representation for pure isometric representations of product system over $N_0^2$}, Journal of Mathematical Analysis and Applications, Volume 531, Issue 1, Part 2, 2024, 127807.
	
	\bibitem{DTV2}
	Saini, D., Trivedi, H., Veerabathiran, S.: \emph{A characterization of invariant subspaces for isometric representations of product system over $N_0^k$}, Complex Anal. Oper. Theory, Volume 18, 75 (2024).
	
	
	
	\bibitem{S009}
	Skalski, A.: {\it On isometric dilations of product systems of $C^{*}$-correspondences and applications to families of contractions associated to higher-rank graphs}. Indiana Univ. Math. J.  \textbf{58}  (2009),  no. 5, 2227-2252.
	
	
	\bibitem{SZ08}
	Skalski, A., Zacharias, J.: \emph{Wold decomposition for representations of product systems of $C^*$-correspondences},
	International J. Math. \textbf{19} (2008), no. 4, 455-479.
	
	
	\bibitem{S06}
	Solel, B.: \emph{ Representations of product systems over semigroups and dilations of commuting CP maps},J. Funct. Anal. \textbf{180} (2006), no. 2, 593-618.
	
	
	\bibitem{S08}
	Solel, B.: \emph{Regular dilations of representations of product systems},
	Math. Proc. R. Ir. Acad. \textbf{180} (2008), no. 1, 89-110.
	
	
	
	
	
\end{thebibliography}
\end{document}